\newtheorem{assumption}{Assumption}
\newtheorem{prop}{Proposition}[section]
\newcommand{\na}{\nabla}
\newcommand{\pa}{\partial}
\newcommand{\eps}{\varepsilon}
\newcommand{\om}{\omega}
\newcommand{\Om}{\Omega}
\newcommand{\De}{\Delta}
\newcommand{\si}{\sigma}
\newcommand{\IOm}{I\times \Om}
\newcommand{\ImOm}{I_m\times \Om}
\newcommand{\jump}[1] {\mbox{$[\![ #1 ]\!]$}}
\newcommand{\norm}[1]{\lVert#1\rVert}
\newcommand{\ltwonorm}[1]{\lVert#1\rVert_{L^2(\Omega)}}
\newcommand{\abs}[1]{\lvert#1\rvert}
\newcommand{\lh}{|\ln h|}
\newcommand{\Ppol}[1]{\mathcal{P}_{#1}}
\newcommand{\sgn}{\operatorname{sgn}}
\newcommand{\half}{\frac{1}{2}}
\newcommand{\ou}{\bar u}
\newcommand{\oq}{\bar q}
\newcommand{\oz}{\bar z}
\newcommand{\R}{\mathbb{R}}
\newcommand{\Qad}{Q_{\text{ad}}}
\newcommand{\Xk}{X^0_k}
\newcommand{\Xkh}{X^{0,1}_{k,h}}
\begin{document}

\title*{Optimal a priori error estimates of  parabolic optimal control problems with a moving point control}
\titlerunning{Parabolic optimal control problems with a moving point control}
\author{Dmitriy Leykekhman and Boris Vexler}
\institute{Dmitriy Leykekhman \at Department of Mathematics,
               University of Connecticut,
              Storrs,
              CT~06269, USA, \email{dmitriy.leykekhman@uconn.edu}
\and Boris Vexler \at Technical University of Munich,
Chair of Optimal Control,
Center for Mathematical Sciences,
Boltzmannstra{\ss}e 3,
85748 Garching by Munich, Germany, \email{vexler@ma.tum.de}}
%
%
\maketitle

\abstract{In this paper we consider a parabolic optimal control problem with a Dirac type control with moving point source in two space dimensions. We discretize the problem with piecewise constant functions in time and continuous piecewise linear finite elements in space. For this discretization we show optimal order of convergence with respect to the time and the space discretization parameters modulo some logarithmic terms. Error analysis for the same problem was carried out in the recent paper \cite{GongW_YanN_2016a}, however, the analysis  there contains a serious flaw. One of the main goals of this paper is to provide the correct proof. The main ingredients of our analysis  are the global and local error estimates on a curve, that have an independent interest.}

\section{Introduction}
In this paper we provide numerical analysis for the following optimal control problem:
\begin{equation}  \label{eq:intro-obj}
    \min_{q,u}J(q,u):= \frac{1}{2} \int_0^T  \|u(t) - \hat{u}(t)\|_{L^2(\Omega)}^2 dt
    + \frac{\alpha}{2}  \int_0^T  |q(t)|^2 dt
\end{equation}
subject to the second order parabolic equation
\begin{subequations}     \label{eq:intro-state}
\begin{align}
    u_t(t,x)-\Delta u(t,x) &= q(t)\delta_{\gamma(t)}, & (t,x) &\in \IOm,\;  \\
    u(t,x) &= 0,    & (t,x) &\in I\times\pa\Omega, \\
   u(0,x) &= 0,    & x &\in \Omega
\end{align}
\end{subequations}
and subject to pointwise control constraints
\begin{equation}\label{eq:control_constraints}
q_a \le q(t) \le q_b \quad \text{a.\,e. in } I.
\end{equation}
Here $I=(0,T)$, $\Omega\subset \mathbb{R}^2$ is a convex polygonal domain and $\delta_{\gamma(t)}$ is the Dirac delta function at point $x_t=\gamma(t)$ at each $t$. We will assume:
\begin{assumption}\label{assumptions gamma_1}
\begin{itemize}
\item $\gamma\in C^1(\bar{I})$ and $\max_{t\in \bar{I}}|\gamma'(t)|\le C_\gamma$.
\end{itemize}
\end{assumption}
\begin{assumption}\label{assumptions gamma_2}
\begin{itemize}
\item $\gamma(t)\subset \overline{\Om}_0\subset\subset \Om_1$, for any $t\in I$, with $\overline{\Om}_1\subset\subset \Om$.
\end{itemize}
\end{assumption}
The parameter $\alpha$ is assumed to be positive and the desired state $\hat u$ fulfills $\hat u \in L^2(I;L^\infty(\Omega))$. The control bounds $q_a,q_b \in \R\cup\{\pm \infty\}$ fulfill $q_a <q_b$. The precise functional-analytic setting is discussed in the next section.

For the discretization, we consider the standard continuous piecewise linear finite elements in space and piecewise constant discontinuous Galerkin method in time. This is a special case ($r=0$, $s=1$) of so called dG($r$)cG($s$) discretization, see e.g.~\cite{ErikssonK_JohnsonC_ThomeeV_1985} for the analysis of the method for parabolic problems and  e.g.~\cite{MeidnerD_VexlerB_2008a, MeidnerD_VexlerB_2008b} for error estimates in the context of optimal control problems. Throughout, we will denote by $h$ the spatial mesh size and by $k$ the size of time steps, see Section~\ref{sec: best approximation} for details.

 The main  result of the paper is the following.
\begin{theorem}\label{thm: main}
Let $\bar{q}$ be optimal control for the problem \eqref{eq:intro-obj}-\eqref{eq:intro-state} and $\bar{q}_{kh}$ be the optimal dG(0)cG(1) solution. Then there exists a constant $C$ independent of $h$ and $k$ such that
$$
\|\bar{q}-\bar{q}_{kh}\|_{L^2(I)} \le C\left(\lh^3(k+h^2)+C_\gamma\lh k\right)\left(\|\bar{q}\|_{L^2(I)}+\|\hat{u}\|_{L^2(I;L^\infty(\Om))}\right).
$$
\end{theorem}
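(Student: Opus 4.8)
The plan is to follow the standard variational route for control-constrained parabolic optimal control problems, reducing the control error to finite element error estimates for the state and adjoint equations that are driven, respectively, by a moving Dirac source and by an $L^2(I;L^\infty)$ right-hand side. First I would write down the first-order optimality systems for the continuous problem (optimal triple $(\bar q, \bar u, \bar z)$ with variational inequality $(\alpha \bar q + \bar z(\cdot,\gamma(\cdot)), q - \bar q)_{L^2(I)} \ge 0$ for all admissible $q$) and for the discrete problem (optimal triple $(\bar q_{kh}, \bar u_{kh}, \bar z_{kh})$ with the analogous discrete inequality). Testing the continuous inequality with $\bar q_{kh}$ and the discrete one with $\bar q$, adding, and using the strong convexity in $q$ (coercivity constant $\alpha$), one obtains the familiar estimate
$$
\alpha \|\bar q - \bar q_{kh}\|_{L^2(I)}^2 \le \left( \bar z(\cdot,\gamma(\cdot)) - \bar z_{kh}(\cdot,\gamma(\cdot)),\, \bar q_{kh} - \bar q \right)_{L^2(I)},
$$
so that everything reduces to estimating $\|\bar z(\cdot,\gamma(\cdot)) - \bar z_{kh}(\cdot,\gamma(\cdot))\|_{L^2(I)}$, i.e. the error of the adjoint state \emph{evaluated on the moving curve} $\gamma$.

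Next I would split this adjoint error by introducing an intermediate function: either $z_{kh}(\bar q)$, the discrete adjoint associated with the continuous control $\bar q$, or better, the discrete adjoint solving the discrete adjoint equation with right-hand side $\bar u(\bar q) - \hat u$. The piece involving the difference of the two continuous/discrete adjoints with the \emph{same} right-hand side is controlled by the finite element error for the adjoint (backward) heat equation with data in $L^2(I;L^\infty(\Omega))$, measured pointwise on the curve $\gamma$; this is exactly where the \emph{global error estimate on a curve} advertised in the abstract enters, producing a factor of order $(k+h^2)$ with logarithmic corrections $\lh^3$. The remaining piece involves the difference of discrete adjoints driven by $\bar u(\bar q) - \bar u_{kh}(\bar q_{kh})$, i.e. by the state error; by a standard duality (transposition) argument this is bounded by $\|\bar u(\bar q) - \bar u_{kh}(\bar q)\|$ plus a term reabsorbing $\|\bar q - \bar q_{kh}\|$ into the left-hand side. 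The state error $\|\bar u(\bar q) - \bar u_{kh}(\bar q)\|$ for the heat equation with moving Dirac source $\bar q(t)\delta_{\gamma(t)}$ is the other main technical input; here the low regularity of the data in two space dimensions is what forces the $\lh$ powers, and the term $C_\gamma \lh k$ arises from the time-regularity defect caused by the \emph{motion} of the source point (a stationary point source would not generate it).

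After assembling the pieces one gets $\alpha \|\bar q - \bar q_{kh}\|_{L^2(I)}^2 \lesssim \big(\lh^3(k+h^2) + C_\gamma \lh k\big)\, \|\bar q - \bar q_{kh}\|_{L^2(I)} \cdot \big(\|\bar q\|_{L^2(I)} + \|\hat u\|_{L^2(I;L^\infty(\Omega))}\big)$, using the stability bounds $\|\bar u\|, \|\bar z\| \lesssim \|\bar q\| + \|\hat u\|_{L^2(I;L^\infty(\Omega))}$; dividing through by $\|\bar q - \bar q_{kh}\|_{L^2(I)}$ yields the claimed estimate. The main obstacle, and the heart of the paper, is establishing the sharp pointwise-on-a-curve finite element error estimates for the parabolic problem — both the global estimate for the adjoint with $L^2(L^\infty)$ data and the estimate for the state with a moving Dirac source — with the correct power of $\lh$ and with the extra $C_\gamma \lh k$ term controlled; these require delicate weighted/localized (Green's function type) arguments in space combined with dG(0) smoothing estimates in time, and they are precisely the step where the earlier analysis in \cite{GongW_YanN_2016a} is claimed to be flawed.
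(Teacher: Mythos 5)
Your overall route is the same as the paper's: variational inequality plus $\alpha$-coercivity of $j''_{kh}$ to reduce the control error to the adjoint error evaluated on the curve, then a splitting through the intermediate discrete adjoint at the fixed control $\bar q$, with the global/local best-approximation results on the curve and the state error as the two technical inputs. The final step in the paper is marginally cleaner than yours: it bounds $\alpha\|\bar q-\bar q_{kh}\|^2$ by $j'_{kh}(\bar q)(\bar q-\bar q_{kh})-j'(\bar q)(\bar q-\bar q_{kh})=(z(\bar q)(t,\gamma(t))-z_{kh}(\bar q)(t,\gamma_k(t)),\bar q-\bar q_{kh})_I$, so the adjoint error appears directly at the \emph{same} control $\bar q$ and no reabsorption of $\|\bar q-\bar q_{kh}\|$ is needed; your version with $\bar z_{kh}=z_{kh}(\bar q_{kh})$ works too after dropping the sign-definite cross term $\|u_{kh}(\bar q)-u_{kh}(\bar q_{kh})\|^2_{I\times\Omega}\ge 0$. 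One small but consequential slip: the discrete optimality condition evaluates $\bar z_{kh}$ at $\gamma_k(t)$, not $\gamma(t)$, and the mismatch $|z(t,\gamma(t))-z(t,\gamma_k(t))|\le C\|z(t,\cdot)\|_{C^1(\Omega_0)}C_\gamma k$ (plus the perturbed Galerkin orthogonality for the state) is precisely where the $C_\gamma\lh k$ term originates; your sketch attributes it only loosely to ``the motion of the source.''

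The one genuine gap is that you leave the norm of the state error unspecified, writing only $\|\bar u(\bar q)-\bar u_{kh}(\bar q)\|$. This choice is not cosmetic: with a moving Dirac source $u(q)$ is only in $L^2(I;L^p(\Omega))$ with $\|u\|_{L^2(I;L^p)}\le Cp\|q\|_{L^2(I)}$ and $\nabla u\in L^2(I;L^s)$ only for $s<2$, so the ``standard'' $L^2(I;L^2(\Omega))$ duality estimate you implicitly invoke would be suboptimal (the paper states this explicitly). The paper instead proves $\|u-u_{kh}\|_{L^2(I;L^1(\Omega))}\le (C\lh^2(k+h^2)+C_\gamma\lh k)\|q\|_{L^2(I)}$ by testing against a dual problem with right-hand side $\sgn(e)\|e(t,\cdot)\|_{L^1(\Omega)}\in L^2(I;L^\infty(\Omega))$, and then pairs this with the residual-driven discrete adjoint error measured in $L^2(I;L^\infty(\Omega))$ via the coercivity of $B$ and the discrete Sobolev inequality $\|e_{kh}\|_{L^2(I;L^\infty)}\le C\lh^{1/2}\|\nabla e_{kh}\|_{L^2(I\times\Omega)}$. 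This $L^1$--$L^\infty$ pairing, together with the interior (local) version of the curve estimate so that only local $W^{2,p}$ regularity of the dual solutions near $\gamma$ is needed (with constants tracked in $p$ and the choice $p=\lh$), is the missing ingredient you would have to supply to actually reach the stated rate.
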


We would also like to point out that in addition to the optimal order estimate, modulo logarithmic terms,   our analysis does not require any relationship between the sizes of the  space  discretization $h$ and the time steps $k$. 

The problem with fixed location of the point source  (i.e. with $\delta_{x_0}(x)$ for some fixed $x_0\in \Om$) starting with the work of Lions \cite{LionsJL_1971},  was investigated in a number of publications, see \cite{AmourouxM_BabaryJP_1978, BanksHT_1992, ChryssoverghiI_1981, DroniouJ_RaymondJP_2000, NguyenPA_RaymondJP_2011} for the continuous problem and \cite{GongW_HinzeM_ZhouZ_2014, LeykekhmanD_VexlerB_2013, LeykekhmanD_VexlerB_2016c} for the finite element approximation and error estimates. There is also a closely related problem of measured valued controls, which received a lot of attention lately   \cite{CasasE_ClasonC_KunischK_2013, CasasE_KunischK_2016, CasasE_VexlerB_ZuazuaE_2015, CasasE_ZuazuaE_2013, KunischK_PieperK_VexlerB_2014}.

The problem with moving Dirac  was considered in \cite{CastroC_ZuazuaE_2004a, NguyenPA_RaymondJP_2001}  on a continuous level. The error analysis was carried out in the recent paper \cite{GongW_YanN_2016a}. However, the analysis  there contains a serious flaw. The last inequality in the estimate $(3.33)$ in \cite{GongW_YanN_2016a} is not correct. One of the main goals of this paper is to provide the correct proof. The main ingredients of our analysis  are the global and local error estimates on a curve, Theorem \ref{thm: global best approx} and Theorem \ref{thm: apriori local}, respectively. These results are new and have an independent interest.

Throughout the paper we use the usual notation for Lebesgue and Sobolev spaces. We denote by $(\cdot,\cdot)_\Omega$ the inner product in $L^2(\Omega)$ and by $(\cdot,\cdot)_{\tilde{I} \times \Omega}$ the inner product in $L^2(\tilde{I}\times\Omega)$ for any subinterval $\tilde{I} \subset I$.

The rest of the paper is organized as follows. In Section \ref{sec: optimality and regularity} we discuss the functional analytic setting of the problem, state the optimality system and prove regularity results for the state and for the adjoint state. In Section \ref{sec: best approximation} we establish important global and local best approximation results along the curve for the heat equation. Finally in Section \ref{sec: optimal control} we prove our main result.

\section{Optimal control problem and regularity} \label{sec: optimality and regularity}
In order to state the functional analytic setting for the optimal control problem, we first introduce the auxiliary problem
\begin{equation}\label{eq: heat equation}
\begin{aligned}
v_t(t,x)-\Delta v(t,x) &= f(t,x), & (t,x) &\in \IOm,\;  \\
    v(t,x) &= 0,    & (t,x) &\in I\times\pa\Omega, \\
   v(0,x) &= 0,    & x &\in \Omega,
\end{aligned}
\end{equation}
with a right-hand side $f \in L^2(I;L^p(\Omega))$ for some $1<p<\infty$. This equation possesses a unique solution
\[
v \in L^2(I; H^1_0(\Omega))\cap H^1(I;H^{-1}(\Omega)).
\]
Due to the convexity of the polygonal domain $\Omega$ the solution $v$ possesses an additional regularity for $p=2$:
\[
v \in L^2(I;H^2(\Omega)\cap H^1_0(\Omega)) \cap H^1(I;L^2(\Omega)),
\]
with the corresponding estimate
\begin{equation}\label{eq: H2 regularity for v}
\|v\|_{L^2(I;H^2(\Omega))}+\|v_t\|_{L^2(I;L^2(\Omega))} \le C \|f\|_{L^2(I;L^2(\Omega))},
\end{equation}
 see, e.g.,~\cite{EvansLC_2010}. From  the Sobolev embedding $ H^2(\Om)\hookrightarrow W^{1,s}(\Om)$ for any $s<\infty$ in two space dimensions and  the previous lemma we can establish the following result  for $s>2$,
\begin{equation}\label{eq: Holder regularity for v}
\norm{v}_{L^2(I;W^{1,s}(\Omega))}\le Cs \norm{v}_{L^2(I;H^2(\Om))}\le Cs\norm{f}_{L^2(I;L^2(\Omega))}.
\end{equation}
The exact form of the constant can be traced, for example, from the
proof of \cite[Thm.~10.8]{AltHW_2016}.
In addition, there holds the following regularity result (see \cite{LeykekhmanD_VexlerB_2013}).
\begin{lemma}\label{lemma: regularity}
If $f\in L^2(I;L^p(\Omega))$ for an arbitrary $p>1$, then $v\in L^2(I;C(\Omega))$ and
$$
\|v\|_{L^2(I;C(\Omega))}\le C_p\|f\|_{L^2(I;L^p(\Omega))},
$$
where $C_p\sim \frac{1}{p-1}$, as $p\to 1$.
\end{lemma}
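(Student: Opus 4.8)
The plan is to combine the $L^p$-$L^q$ smoothing estimates for the heat semigroup with the Sobolev embedding into $C(\Omega)$, and to track the dependence of the constant on $p$ as $p \to 1$. Write $v(t) = \int_0^t e^{-(t-s)A} f(s)\, ds$, where $A = -\Delta$ with homogeneous Dirichlet boundary conditions. First I would fix a $t$ and estimate $\|v(t)\|_{C(\Omega)}$ by choosing an exponent $\sigma > 2$ (depending on $p$, roughly $\sigma = \frac{2p}{p-1}$ or any fixed $\sigma>2$ if $p \ge$ some threshold) so that $W^{1,\sigma}(\Omega) \hookrightarrow C(\bar\Omega)$ in two dimensions; by elliptic regularity $D(A^{1/2 + \epsilon})$ embeds into $W^{1,\sigma}$, so $\|v(t)\|_{C(\Omega)} \le C \|A^{s_0} v(t)\|_{L^p(\Omega)}$ for a suitable $s_0 \in (1/2, 1)$ with the embedding constant growing like $\sigma \sim \frac{1}{p-1}$, consistent with \eqref{eq: Holder regularity for v}.

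Next I would apply the standard parabolic smoothing bound $\|A^{s_0} e^{-\tau A}\|_{L^p \to L^p} \le C \tau^{-s_0}$, which gives
$$
\|A^{s_0} v(t)\|_{L^p(\Omega)} \le C \int_0^t (t-s)^{-s_0}\|f(s)\|_{L^p(\Omega)}\, ds .
$$
Since $s_0 < 1$, the kernel $(t-s)^{-s_0}$ is integrable on $I$, and the map $f \mapsto v$ here is a convolution with an $L^1(I)$ kernel; by Young's inequality in the time variable (in the form $L^2(I) * L^1(I) \to L^2(I)$),
$$
\|v\|_{L^2(I;C(\Omega))} \le C \|\,\tau^{-s_0}\,\|_{L^1(0,T)}\, \|f\|_{L^2(I;L^p(\Omega))} \le C_p \|f\|_{L^2(I;L^p(\Omega))}.
$$
Finally I would assemble the $p$-dependence: the factor $C_p$ picks up the embedding constant $\sim \sigma \sim \frac{1}{p-1}$ from the Sobolev step, while the time integral $\int_0^T \tau^{-s_0}\,d\tau = \frac{T^{1-s_0}}{1-s_0}$ stays bounded if one keeps $s_0$ bounded away from $1$ (which is possible as long as $\sigma$, hence $s_0 = 1/2 + 1/\sigma$, is finite). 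This yields $C_p \sim \frac{1}{p-1}$ as $p \to 1^+$, as claimed.

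The main obstacle is making the chain $C(\bar\Omega) \hookleftarrow W^{1,\sigma}(\Omega) \hookleftarrow D(A^{s_0})$ quantitative with the correct $\sigma$-dependence of the constant on a convex polygon—one must invoke the $H^2$-regularity \eqref{eq: H2 regularity for v} valid on convex polygonal domains together with interpolation to identify $D(A^{s_0}) \hookrightarrow W^{1,\sigma}$, and cite the explicit form of the Sobolev constant (as the excerpt does via \cite[Thm.~10.8]{AltHW_2016}) to get the $\frac{1}{p-1}$ blow-up rather than a worse rate. A minor technical point is that one should work with the fractional power spaces (or equivalently with real interpolation spaces between $L^p$ and $W^{2,p}\cap W^{1,p}_0$) rather than the $p=2$ spaces, so the smoothing estimate is applied in $L^p$ throughout; everything else is routine. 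Alternatively, one can avoid fractional powers entirely and argue by duality, testing $v(t)$ against $L^{p'}$ functions and using the $L^{p'}\to L^\infty$ smoothing of the adjoint heat kernel, which gives the same result and the same constant behavior.
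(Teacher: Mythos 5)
The paper does not prove Lemma~\ref{lemma: regularity} itself; it quotes it from \cite{LeykekhmanD_VexlerB_2013}, where the argument is essentially the one you relegate to your single closing sentence: represent $v$ through the Dirichlet heat kernel $G$, use the Gaussian bound $\|G(\tau,x,\cdot)\|_{L^{p'}(\Omega)}\le C\tau^{-1/p}$ (two space dimensions) with $C$ independent of $p$, and apply Young's convolution inequality in the time variable, which yields $C_p\le C\int_0^T\tau^{-1/p}\,d\tau=\tfrac{CpT^{(p-1)/p}}{p-1}\sim\tfrac{1}{p-1}$. That route is correct and should be the proof.

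The route you actually develop contains a genuine gap in the exponent bookkeeping and, more importantly, cannot produce the claimed constant. For $D(A_p^{s_0})\hookrightarrow W^{1,\sigma}(\Omega)\hookrightarrow C(\bar\Omega)$ with $\sigma>2$ in two dimensions, comparison of Sobolev numbers forces $2s_0-\tfrac{2}{p}\ge 1-\tfrac{2}{\sigma}>0$, i.e.\ $s_0\ge\tfrac12+\tfrac1p-\tfrac1\sigma>\tfrac1p$; your formula $s_0=\tfrac12+\tfrac1\sigma$ is incorrect, and with $\sigma=\tfrac{2p}{p-1}\to\infty$ you would need $s_0\to\tfrac32>1$. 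So $s_0$ cannot be kept bounded away from $1$: necessarily $s_0>\tfrac1p\to 1$ as $p\to1$. Consequently the time factor alone satisfies $\int_0^T\tau^{-s_0}\,d\tau\sim\tfrac{1}{1-s_0}\ge\tfrac{p}{p-1}$, which already exhausts the entire $\tfrac{1}{p-1}$ budget, and on top of it you must pay the Sobolev embedding constant, which also diverges because the embedding is used at a gap $2s_0-\tfrac2p\le\tfrac{2(p-1)}{p}\to0$. The product is of order $\tfrac{1}{(p-1)^2}$ at best, not $\tfrac{1}{p-1}$. The loss is structural: factoring the $L^p\to L^\infty$ smoothing into an $L^p\to L^p$ analytic-semigroup estimate composed with a near-critical Sobolev embedding is strictly weaker than the direct kernel bound $\|e^{-\tau A}\|_{L^p\to L^\infty}\le C\tau^{-1/p}$ with $p$-independent $C$. (Your appeal to \eqref{eq: Holder regularity for v} is also off target, since that estimate requires $f\in L^2(I;L^2(\Omega))$, whereas the whole point here is $p$ near $1$.) Finally, the duality variant is misstated: to bound $\|v(t)\|_{L^\infty}$ by duality you pair with $g\in L^1(\Omega)$ and need the $L^1\to L^{p'}$ bound $\|e^{-\tau A}g\|_{L^{p'}}\le C\tau^{-1/p}\|g\|_{L^1}$ (the dual of $L^p\to L^\infty$), not an ``$L^{p'}\to L^\infty$'' bound.
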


We will also need the following local regularity result (see \cite{LeykekhmanD_VexlerB_2013}). 
\begin{lemma}\label{lemma: regularity local_Om_0}
Let $\Om_0 \subset\subset \Om_1\subset\subset \Om$ and $f\in L^2(I;L^2(\Omega))\cap L^2(I;L^p(\Om_1))$ for some  $2 \le p < \infty$. Then $v \in L^2(I;W^{2,p}(\Om_0)) \cap H^1(I;L^p(\Om_0))$  and there exists a constant $C$ independent of $p$ such that
$$
\|v_t\|_{L^2(I;L^p(\Om_0))}+\| v\|_{L^2(I;W^{2,p}(\Om_0))}\le Cp(\|f\|_{L^2(I;L^p(\Om_1))}+\|f\|_{L^2(I;L^2(\Om))}).
$$
\end{lemma}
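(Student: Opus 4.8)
The plan is to represent $v$ by means of the Dirichlet heat kernel $G$ of $\pa_t-\De$ on $\Om$,
\[
v(t,x)=\int_0^t\!\!\int_\Om G(t-s,x,y)\,f(s,y)\,dy\,ds ,
\]
and to split \emph{simultaneously} the source and the kernel so that the only genuinely singular contribution reduces to a free heat potential on $\R^2$. Set $\delta_1=\operatorname{dist}(\Om_1,\pa\Om)>0$ and $d=\operatorname{dist}(\Om_0,\pa\Om_1)>0$, write $f=f\chi_{\Om_1}+f\chi_{\Om\setminus\Om_1}$ and $G=\Gamma+(G-\Gamma)$, where $\Gamma$ is the fundamental solution of the heat equation on $\R^2$. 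This decomposes $v=v_{\mathrm{near}}+v_{\mathrm{corr}}+v_{\mathrm{far}}$, where $v_{\mathrm{near}}$ is the free heat potential of the zero–extended source $f\chi_{\Om_1}$ (equivalently, the solution on $\R^2$ of $w_t-\De w=f\chi_{\Om_1}$, $w(0)=0$), $v_{\mathrm{corr}}$ is the potential of $f\chi_{\Om_1}$ against the regular part $G-\Gamma$, and $v_{\mathrm{far}}$ is the potential of $f\chi_{\Om\setminus\Om_1}$ against $G$. By a standard density argument it suffices to prove the a priori bound for smooth $f$; all derivatives below are taken in $x$ at points $x\in\Om_0$.

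For $v_{\mathrm{near}}$ I would invoke maximal parabolic regularity for the heat equation on $\R^2$ in the \emph{mixed} norm $L^2(I;L^p(\R^2))$: if $w_t-\De w=h$ on $\R^2$ with $w(0)=0$, then $\norm{D^2 w}_{L^2(I;L^p(\R^2))}\le C_{\mathrm{mr}}(p)\,\norm{h}_{L^2(I;L^p(\R^2))}$, where the decisive fact is that the constant grows only \emph{linearly}, $C_{\mathrm{mr}}(p)\le Cp$ for $p\ge2$ — the parabolic counterpart of the elliptic $W^{2,p}$ constant, obtained from parabolic Calder\'on--Zygmund / Fourier-multiplier theory for the second–derivative heat potential. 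With $h=f\chi_{\Om_1}$ and restriction to $\Om_0$ this gives $\norm{D^2 v_{\mathrm{near}}}_{L^2(I;L^p(\Om_0))}\le Cp\,\norm{f}_{L^2(I;L^p(\Om_1))}$, while $\norm{v_{\mathrm{near}}}_{L^2(I;L^p)}$ and $\norm{\na v_{\mathrm{near}}}_{L^2(I;L^p)}$ are even $p$-independent, since $\norm{\Gamma(\tau,\cdot)}_{L^1(\R^2)}=1$ and $\norm{\na\Gamma(\tau,\cdot)}_{L^1(\R^2)}\le C\tau^{-1/2}\in L^1(0,T)$, so that Young's inequality in time applies.

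The remaining two pieces are lower order with $p$-independent bounds. For $v_{\mathrm{far}}$ the points $x\in\Om_0$ and $y\in\Om\setminus\Om_1$ are separated by at least $d$, hence interior parabolic estimates for the heat equation applied to $(\tau,x)\mapsto G(\tau,x,y)$ together with the universal bound $0\le G\le\Gamma$ give $\abs{D^\beta_x G(\tau,x,y)}\le C\tau^{-2}e^{-c\abs{x-y}^2/\tau}$ for $\abs{\beta}\le2$; using $\sup_{0<\tau<T}\tau^{-2}e^{-cd^2/(2\tau)}<\infty$ together with Cauchy--Schwarz in $y$ and then in time yields $\norm{D^\beta v_{\mathrm{far}}}_{L^\infty(I\times\Om_0)}\le C\norm{f}_{L^2(I;L^2(\Om))}$. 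For $v_{\mathrm{corr}}$, for fixed $x$ the regular part $\Gamma-G$ solves the heat equation in $(\tau,y)$ with zero initial data and boundary values $\Gamma(\tau,x,\cdot)|_{\pa\Om}$; since for $x\in\Om_1$ the boundary values of $\Gamma$ and of $D^\beta_x\Gamma$ are bounded by $C\tau^{-2}e^{-c\delta_1^2/\tau}$, the maximum principle gives $\abs{D^\beta_x(G-\Gamma)(\tau,x,y)}\le C$ uniformly for $x,y\in\Om_1$, $0<\tau<T$, hence $\norm{D^\beta v_{\mathrm{corr}}}_{L^\infty(I\times\Om_0)}\le C\norm{f}_{L^2(I;L^2(\Om))}$. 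Summing the three contributions, and using $\norm{v}_{L^2(I;L^p(\Om_0))}\le\abs{\Om_0}^{1/p}\norm{v}_{L^2(I;C(\Om))}\le C\norm{f}_{L^2(I;L^2(\Om))}$ from Lemma~\ref{lemma: regularity}, bounds $\norm{v}_{L^2(I;W^{2,p}(\Om_0))}$ by $Cp\bigl(\norm{f}_{L^2(I;L^p(\Om_1))}+\norm{f}_{L^2(I;L^2(\Om))}\bigr)$; finally $v_t=f+\De v$ combined with $\norm{f}_{L^2(I;L^p(\Om_0))}\le\norm{f}_{L^2(I;L^p(\Om_1))}$ gives the bound on $\norm{v_t}_{L^2(I;L^p(\Om_0))}$, and the memberships $v\in L^2(I;W^{2,p}(\Om_0))\cap H^1(I;L^p(\Om_0))$ follow by passing to the limit in the density argument.

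The main obstacle — and the reason for splitting the source instead of the solution — is controlling the dependence on $p$. The natural shortcut would be to multiply $v$ by a cut-off $\eta\in C_0^\infty(\Om_1)$ with $\eta\equiv1$ on $\Om_0$ and apply maximal regularity to $\eta v$, which solves $\pa_t(\eta v)-\De(\eta v)=\eta f-2\na\eta\cdot\na v-(\De\eta)v$; but then one must estimate $\na\eta\cdot\na v$ in $L^2(I;L^p(\Om_1))$, for which \eqref{eq: Holder regularity for v} only furnishes the bound $Cp\,\norm{f}_{L^2(I;L^2(\Om))}$, and multiplied by the linear growth of the maximal-regularity constant this produces a suboptimal $O(p^2)$ estimate. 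Splitting the source removes every occurrence of $\na v$ from the right-hand side, so the factor $p$ enters exactly once. The accompanying heat-kernel estimates cause no difficulty with the polygonal corners, since all $x$-derivatives are evaluated at points $x\in\Om_0$ lying well inside $\Om$.
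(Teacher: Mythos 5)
The paper offers no proof of this lemma at all --- it is quoted from \cite{LeykekhmanD_VexlerB_2013} --- so the comparison is necessarily with the localization argument of that reference rather than with anything in the present text. Your argument is, as far as I can check, correct, and it takes a genuinely different route. The standard proof multiplies $v$ by a cutoff $\eta$ supported in $\Om_1$ with $\eta\equiv 1$ near $\Om_0$ and applies free-space maximal regularity to $\eta v$; the commutator term $\na\eta\cdot\na v$, which you correctly identify as the source of a potential $O(p^2)$ loss, is there controlled with a $p$-independent constant by a bootstrap (apply the local estimate once with a \emph{fixed} exponent $p_0>2$ and use $W^{2,p_0}\hookrightarrow C^1$ on an intermediate domain to get $\na v\in L^2(I;L^\infty(\Om_1'))$), so the cutoff route is not actually doomed --- but your fix is arguably cleaner. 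Splitting the source into $f\chi_{\Om_1}+f\chi_{\Om\setminus\Om_1}$ and the kernel into $\Gamma+(G-\Gamma)$ removes $\na v$ from the right-hand side entirely, so the factor $p$ enters exactly once, through the singular piece $v_{\mathrm{near}}$; the price is that you must work directly with the Dirichlet heat kernel, and your treatment of the two regular pieces (domain monotonicity $0\le G\le\Gamma$, interior caloric derivative estimates after extending $G$ by zero to negative times away from the pole, and the maximum-principle bound on $D^\beta_x(\Gamma-G)$ for $x$ at fixed distance from $\pa\Om$) is sound precisely because, as you note, all $x$-derivatives are taken well inside the polygon. The one load-bearing ingredient you assert rather than prove is the mixed-norm maximal regularity bound $\norm{D^2w}_{L^2(I;L^p(\R^2))}\le Cp\,\norm{h}_{L^2(I;L^p(\R^2))}$ with \emph{linear} growth in $p$. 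This is true, but it is not a triviality and should be justified or cited: for instance, by Benedek--Calder\'on--Panzone extrapolation in the time variable from the parabolic Calder\'on--Zygmund estimate on $L^p(I\times\R^2)$ (whose constant is $O(p)$ for $p\ge2$), using that the $\mathcal{L}(L^p)$-valued H\"ormander constant of the kernel $t\mapsto D^2e^{t\Delta}$ is controlled by $L^1_x$-norms of kernel differences and is therefore independent of $p$. Since the cited reference relies on the same quantitative input, nothing circular is hidden; with that one citation supplied, your proof stands.
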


To introduce a weak solution of the state equation~\eqref{eq:intro-state} we use the method of transposition, (cf.~\cite{LionsJL_MagenesE_Vol2}). For a given control $q \in Q = L^2(I)$ we denote by $u = u(q) \in L^2(I;L^p(\Omega))$ with $2\le p<\infty$ a weak solution of~\eqref{eq:intro-state}, if for all $\varphi \in L^2(I;L^{p'}(\Omega))$ with $\frac{1}{p}+\frac{1}{p'}=1$ there holds
\[
\langle u,\varphi\rangle_{L^2(I;L^p(\Omega)),L^2(I;L^{p'}(\Omega))} = \int_I w(t,\gamma(t))q(t) \,dt,
\]
where $w \in L^2(I;W^{2,p'}(\Omega)\cap H^1_0(\Omega)) \cap H^1(I;L^{p'}(\Omega))$ is the weak solution of the adjoint equation
\begin{equation}\label{eq:adjoint_w}
\begin{aligned}
-w_t(t,x)-\Delta w(t,x) &= \varphi(t,x), & (t,x) &\in \IOm,\;  \\
    w(t,x) &= 0,    & (t,x) &\in I\times\pa\Omega, \\
   w(T,x) &= 0,    & x &\in \Omega.
\end{aligned}
\end{equation}
The existence of this weak solution $u=u(q)$ follows by duality using the embedding $L^2(I;W^{2,p'}(\Omega)) \hookrightarrow L^2(I;C(\Omega))$ for $p'>1$. Using Lemma~\ref{lemma: regularity} we can prove additional regularity for the state variable $u=u(q)$.

\begin{prop} \label{prop:reg_u}
Without lose of generality we assume $2\le p<\infty$. Let $q \in Q = L^2(I)$ be given and $u =u(q)$ be the solution of the state equation~\eqref{eq:intro-state}. Then $u\in L^2(I;L^p(\Omega))$ for any $p<\infty$ and the following estimate holds for $p\to \infty$ with a constant $C$ independent of $p$,
\[
\norm{u}_{L^2(I;L^p(\Omega))} \le C p \norm{q}_{L^2(I)}.
\]
\end{prop}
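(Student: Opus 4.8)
The plan is to argue by duality, exactly mirroring the definition of the weak solution $u = u(q)$ by transposition. Fix $p$ with $2 \le p < \infty$ and let $p'$ be its conjugate exponent, so $1 < p' \le 2$. By the definition of $u$ as the transposition solution, for any $\varphi \in L^2(I;L^{p'}(\Omega))$ we have
\[
\langle u, \varphi \rangle = \int_I w(t,\gamma(t)) q(t)\, dt,
\]
where $w = w(\varphi)$ solves the backward adjoint problem \eqref{eq:adjoint_w}. The norm $\norm{u}_{L^2(I;L^p(\Omega))}$ is the supremum of the left-hand side over all such $\varphi$ with $\norm{\varphi}_{L^2(I;L^{p'}(\Omega))} \le 1$, so it suffices to bound the right-hand side for each such $\varphi$.

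Next I would estimate the right-hand side by Cauchy--Schwarz in time:
\[
\left| \int_I w(t,\gamma(t)) q(t)\, dt \right| \le \norm{q}_{L^2(I)} \left( \int_I |w(t,\gamma(t))|^2\, dt \right)^{1/2} \le \norm{q}_{L^2(I)} \norm{w}_{L^2(I;C(\Omega))},
\]
using that $|w(t,\gamma(t))| \le \norm{w(t)}_{C(\Omega)}$ pointwise in $t$ (here $w(t) \in C(\Omega)$ since $p' > 1$, by the embedding $W^{2,p'}(\Omega) \hookrightarrow C(\Omega)$). Now apply Lemma~\ref{lemma: regularity} to the adjoint equation \eqref{eq:adjoint_w}: since $\varphi \in L^2(I;L^{p'}(\Omega))$ with $p' > 1$, we get $w \in L^2(I;C(\Omega))$ with
\[
\norm{w}_{L^2(I;C(\Omega))} \le C_{p'} \norm{\varphi}_{L^2(I;L^{p'}(\Omega))} \le C_{p'},
\]
where $C_{p'} \sim \frac{1}{p'-1}$ as $p' \to 1$. (Lemma~\ref{lemma: regularity} is stated for the forward heat equation, but the backward equation \eqref{eq:adjoint_w} is handled by the time reversal $t \mapsto T-t$, which leaves the estimate unchanged.) Taking the supremum over $\varphi$ gives $\norm{u}_{L^2(I;L^p(\Omega))} \le C_{p'} \norm{q}_{L^2(I)}$.

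The only remaining point is to convert the growth constant from $p'$ to $p$. Since $\frac{1}{p} + \frac{1}{p'} = 1$, we have $p' - 1 = \frac{p'}{p} = \frac{1}{p-1}$, so $C_{p'} \sim \frac{1}{p' - 1} = p - 1 \le p$ as $p \to \infty$. Hence $\norm{u}_{L^2(I;L^p(\Omega))} \le C p \norm{q}_{L^2(I)}$ with $C$ independent of $p$, which is the claim. I do not expect any real obstacle here: the argument is a clean duality computation, and the only thing one must be slightly careful about is tracking the dependence of the constant in Lemma~\ref{lemma: regularity} through the conjugate-exponent relation, which is precisely why the lemma was stated with the explicit rate $C_p \sim \frac{1}{p-1}$.
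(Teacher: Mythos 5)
Your proposal is correct and follows essentially the same route as the paper: a duality argument via the transposition definition of $u$, Cauchy--Schwarz in time, and Lemma~\ref{lemma: regularity} applied to the (time-reversed) adjoint problem, with the constant $\frac{1}{p'-1} = p-1 \le Cp$ tracked through the conjugate-exponent relation. The only difference is that you make the conversion from $p'$ to $p$ fully explicit, which the paper leaves as a one-line remark.
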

\begin{proof}
To establish the result we use a  duality argument. There holds
\[
\norm{u}_{L^2(I;L^p(\Om))}=\sup_{\norm{\varphi}_{L^2(I;L^{p'}(\Om))}=1} (u,\varphi)_{I\times \Omega}, \quad \text{where} \quad \frac{1}{p}+\frac{1}{p'}=1.
\]
Let $w$ be the solution to~\eqref{eq:adjoint_w} for $\varphi \in L^2(I;L^{p'}(\Om))$ with $\norm{\varphi}_{L^2(I;L^{p'}(\Om))}=1$.
From Lemma~\ref{lemma: regularity}, $w\in L^2(I;C(\Omega))$ and the following estimate holds
$$
\|w\|_{L^2(I;C(\Omega))}\le\frac{C}{p'-1} \|\varphi\|_{L^2(I;L^{p'}(\Omega))}=\frac{C}{p'-1}\le Cp, \ \text{as}\ p\to \infty.
$$
Thus,
\[
\begin{aligned}
\|u\|_{L^2(I;L^p(\Om))}&=\sup_{\|\varphi\|_{L^2(I;L^{p'}(\Om))}=1}(u,\varphi)_{\IOm}\\
&=\int_I q(t)w(t,\gamma(t))\,dt\le \|q\|_{L^2(I)}\|w\|_{L^2(I;C(\Omega))}\le Cp\|q\|_{L^2(I)}.
\end{aligned}
\]
\end{proof}

\begin{remark}
We would like to note that the above regularity requires only Assumption 2 on $\gamma$. Higher regularity of $\gamma$ is needed for optimal order error estimates only.
\end{remark}

A  further regularity result for the state equation follows from~\cite{ElschnerJ_RehbergJ_SchmidtG_2007}.
\begin{prop}\label{prop:reg_u2}
Let $q \in Q = L^2(I)$ be given and $u =u(q)$ be the solution of the state equation~\eqref{eq:intro-state}. Then for each $1 < s < 2$ there holds
\[
u \in L^2(I;W^{1,s}_0(\Omega)) \quad\text{and}\quad  u_t \in L^2(I;W^{-1,s}(\Omega)).
\]
Moreover, the state $u$ fulfills the following weak formulation
\[
\langle u_t, \varphi \rangle + (\nabla u, \nabla \varphi) = \int_I q(t)\varphi(t,\gamma(t)) \,dt \quad \text{for all }\; \varphi \in L^2(I;W^{1,s'}_0(\Omega)),
\]
where $\frac{1}{s'}+\frac{1}{s} = 1$ and $\langle \cdot,\cdot\rangle$ is the duality product between $ L^2(I;W^{-1,s}(\Omega))$ and $ L^2(I;W^{1,s'}_0(\Omega))$.
\end{prop}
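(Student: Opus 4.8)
The plan is to invoke the maximal-parabolic-regularity result of Elschner--Rehberg--Schmidt on the scale of $W^{-1,s}$ spaces. First I would recall that on a convex polygonal domain $\Omega\subset\R^2$ the Laplacian with homogeneous Dirichlet conditions generates an analytic semigroup on $W^{-1,s}(\Omega)$ for every $s$ in an interval around $2$, and in particular for all $1<s<2$; this is precisely the setting of \cite{ElschnerJ_RehbergJ_SchmidtG_2007}. Consequently, the abstract parabolic problem $u_t - \Delta u = F$, $u(0)=0$, has, for $F\in L^2(I;W^{-1,s}(\Omega))$, a unique solution with $u\in L^2(I;W^{1,s}_0(\Omega))$, $u_t\in L^2(I;W^{-1,s}(\Omega))$, together with the a priori bound $\norm{u}_{L^2(I;W^{1,s}_0(\Omega))}+\norm{u_t}_{L^2(I;W^{-1,s}(\Omega))}\le C\norm{F}_{L^2(I;W^{-1,s}(\Omega))}$.

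Second, I would check that the right-hand side $q(t)\delta_{\gamma(t)}$ defines, for a.e. $t$, an element of $W^{-1,s}(\Omega)$ with $L^2(I)$-integrable norm. For $1<s<2$ the Sobolev conjugate exponent $s'=s/(s-1)>2$ satisfies $W^{1,s'}_0(\Omega)\hookrightarrow C(\overline{\Omega})$ in two dimensions, so $\varphi\mapsto \varphi(\gamma(t))$ is a bounded linear functional on $W^{1,s'}_0(\Omega)$ with norm bounded uniformly in $t$ by the embedding constant (using Assumption~\ref{assumptions gamma_2} and the boundedness of $\gamma$). Hence $\delta_{\gamma(t)}\in W^{-1,s}(\Omega)$ with $\norm{\delta_{\gamma(t)}}_{W^{-1,s}(\Omega)}\le C$, and therefore $F(t)=q(t)\delta_{\gamma(t)}$ satisfies $\norm{F}_{L^2(I;W^{-1,s}(\Omega))}\le C\norm{q}_{L^2(I)}$.

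Third, combining these two facts yields $u\in L^2(I;W^{1,s}_0(\Omega))$ and $u_t\in L^2(I;W^{-1,s}(\Omega))$, and the weak formulation
\[
\langle u_t,\varphi\rangle + (\nabla u,\nabla\varphi) = \int_I q(t)\varphi(t,\gamma(t))\,dt
\]
for all $\varphi\in L^2(I;W^{1,s'}_0(\Omega))$ follows by testing the abstract equation against $\varphi$ and unravelling the duality pairing for $\delta_{\gamma(t)}$. Finally, I would verify that this solution coincides with the transposition solution $u=u(q)$ from Proposition~\ref{prop:reg_u}: both are characterized by the same identity tested against the same (dense) class of smooth functions, and the embedding $W^{1,s}_0(\Omega)\hookrightarrow L^p(\Omega)$ (valid for suitable $p$) places the new $u$ in the space where the transposition solution is unique, so the two must agree.

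The main obstacle is not any deep estimate but correctly importing the functional-analytic framework of \cite{ElschnerJ_RehbergJ_SchmidtG_2007}: one must ensure that the admissible range of $s$ for maximal $W^{-1,s}$-regularity on a convex polygon actually contains all of $(1,2)$ — this uses convexity crucially — and that the zero initial condition is compatible with the trace space, which it is since the relevant real-interpolation space between $W^{1,s}_0$ and $W^{-1,s}$ embeds in a space containing $0$. Once the cited regularity is in hand with the correct exponent range, the rest is the routine duality/embedding bookkeeping sketched above.
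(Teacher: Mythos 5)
Your proposal follows essentially the same route as the paper: identify $q(t)\delta_{\gamma(t)}$ as an element of $L^2(I;W^{-1,s}(\Omega))$ via the embedding $W^{1,s'}_0(\Omega)\hookrightarrow C(\bar\Omega)$ for $s'>2$, invoke the maximal parabolic regularity result of Elschner--Rehberg--Schmidt (combined with the isomorphism $-\Delta\colon W^{1,s}_0(\Omega)\to W^{-1,s}(\Omega)$, which the paper attributes to Jerison--Kenig), and obtain the weak formulation by density. The argument is correct and matches the paper's proof in all essentials.
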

\begin{proof}
For $s<2$ we have $s'>2$ and therefore $W^{1,s'}_0(\Omega)$ is embedded into $C(\bar \Omega)$. Therefore the right-hand side $q(t)\delta_{\gamma(t)}$ of the state equation can be identified with an element in $L^2(I;W^{-1,s}(\Omega))$. Using the result from~\cite[Theorem 5.1]{ElschnerJ_RehbergJ_SchmidtG_2007} on maximal parabolic regularity and exploiting the fact that $-\Delta \colon W^{1,s}_0(\Omega) \to W^{-1,s}(\Omega)$ is an isomorphism, see~\cite{JerisonD_KenigCE_1995}, we obtain
\[
u \in L^2(I;W^{1,s}_0(\Omega)) \quad\text{and}\quad u_t \in L^2(I;W^{-1,s}(\Omega)).
\]
 Given the above regularity the corresponding weak formulation is fulfilled by a standard density argument.
\end{proof}

As the next step we introduce the reduced cost functional $j \colon Q \to \mathbb{R}$ on the control space $Q = L^2(I)$ by
\[
j(q) = J(q,u(q)),
\]
where $J$ is the cost function in~\eqref{eq:intro-obj} and $u(q)$ is the weak solution of the state equation~\eqref{eq:intro-state} as defined above. The optimal control problem can then be equivalently reformulated as
\begin{equation}\label{eq:min_pr_red}
\min\,  j(q), \quad q \in \Qad,
\end{equation}
where the set of admissible controls is defined according to~\eqref{eq:control_constraints} by
\begin{equation}\label{eq:qad}
\Qad = \{q \in Q\ |\ q_a \le q(t) \le q_b \;\text{a. e. in } I\}.
\end{equation}
By standard arguments this optimization problem possesses a unique solution $\oq \in Q = L^2(I)$ with the corresponding state $\ou = u(\oq) \in L^2(I;L^p(\Omega))$ for all $p<\infty$, see Proposition~\ref{prop:reg_u} for the regularity of $\ou$. Due to the fact, that this optimal control problem is convex, the solution $\oq$ is equivalently characterized by the optimality condition
\begin{equation}\label{eq:opt_cond}
j'(\oq)(\partial q - \oq) \ge 0 \quad \text{for all } \partial q \in \Qad.
\end{equation}
The (directional) derivative $j'(q)(\partial q)$ for given $q, \partial q \in Q$ can be expressed as
\[
j'(q)(\partial q) = \int_I \left(\alpha q(t) + z(t,\gamma(t)) \right) \partial q(t)\, dt,
\]
where $z = z(q)$ is the solution of the adjoint equation
\begin{subequations}     \label{eq:intro-adjoint}
\begin{align}
    -z_t(t,x)-\Delta z(t,x) &= u(t,x)-\hat{u}(t,x), & (t,x) &\in \IOm,\; \label{eq:intro-adjoint_a} \\
    z(t,x) &= 0,    & (t,x) &\in I\times\pa\Omega, \\
   z(T,x) &= 0,    & x &\in \Omega,
\end{align}
\end{subequations}
and $u=u(q)$ on the right-hand side of~\eqref{eq:intro-adjoint_a} is the solution of the state equation~\eqref{eq:intro-state}.
The adjoint solution, which corresponds to the optimal control $\oq$ is denoted by $\oz = z(\oq)$.

The optimality condition~\eqref{eq:opt_cond} is a variational inequality, which can be equivalently formulated using the  projection
\[
P_{\Qad} \colon Q \to \Qad, \quad P_{\Qad}(q)(t) = \min\bigl(q_b,\max(q_a,q(t))\bigr).
\]
The resulting condition reads:
\begin{equation}\label{eq:proj}
\oq(t) = P_{\Qad}\left(-\frac{1}{\alpha} \oz(t,\gamma(t))\right).
\end{equation}

In the next proposition we provide regularity results for the solution of the adjoint equation.
\begin{prop}\label{prop:reg_z}
Let $q\in Q$ be given, let $u=u(q)$ be the corresponding state fulfilling~\eqref{eq:intro-state} and let $z=z(q)$ be the corresponding adjoint state fulfilling~\eqref{eq:intro-adjoint}. Then,
\begin{itemize}
\item[(a)]\  $z \in L^2(I;H^2(\Omega)\cap H^1_0(\Omega)) \cap H^1(I;L^2(\Omega))$ and the following estimate holds
\[
\norm{\nabla^2 z}_{L^2(I;L^2(\Omega))} +  \norm{z_t}_{L^2(I;L^2(\Omega))} \le C (\norm{q}_{L^2(I)} + \norm{\hat u}_{L^2(I;L^2(\Omega))}).
\]

\item[(b)]\ If $\Om_0 \subset\subset \Om$, then $z \in L^2(I;W^{2,p}(\Om_0)) \cap H^1(I;L^p(\Om_0))$ for all $2 \le p < \infty$  and the following estimate holds
\[
\norm{\nabla^2 z}_{L^2(I;L^p(\Om_0))} +  \norm{z_t}_{L^2(I;L^p(\Om_0))} \le C p^2  (\norm{q}_{L^2(I)}+\norm{\hat u}_{L^2(I;L^\infty(\Omega))}).
\]
\end{itemize}
\end{prop}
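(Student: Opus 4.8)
The plan is to derive both statements by feeding the right-hand side $u-\hat u$ of the adjoint equation~\eqref{eq:intro-adjoint} into the regularity results already established for the (backward) heat equation. Two ingredients control this right-hand side. First, since $\Omega$ is bounded, $\hat u \in L^2(I;L^p(\Omega))$ for every $p<\infty$ with $\norm{\hat u}_{L^2(I;L^p(\Omega))} \le C\,\norm{\hat u}_{L^2(I;L^\infty(\Omega))}$ and a constant independent of $p$ (the embedding constant $|\Omega|^{1/p}$ is uniformly bounded). Second, by Proposition~\ref{prop:reg_u}, $u=u(q)\in L^2(I;L^p(\Omega))$ with $\norm{u}_{L^2(I;L^p(\Omega))}\le Cp\,\norm{q}_{L^2(I)}$. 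Throughout, the backward problem~\eqref{eq:intro-adjoint} is reduced to the forward form~\eqref{eq: heat equation} by the time reversal $t\mapsto T-t$, which preserves all the space–time norms involved; this step is routine.

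For part~(a), take $p=2$. Then $f := u-\hat u \in L^2(I;L^2(\Omega))$ with $\norm{f}_{L^2(I;L^2(\Omega))}\le C\bigl(\norm{q}_{L^2(I)}+\norm{\hat u}_{L^2(I;L^2(\Omega))}\bigr)$. Because $\Omega$ is a convex polygon, the $H^2$-regularity estimate~\eqref{eq: H2 regularity for v} applies to the time-reversed equation and gives $z \in L^2(I;H^2(\Omega)\cap H^1_0(\Omega))\cap H^1(I;L^2(\Omega))$ together with the asserted bound (note $\norm{v}_{L^2(I;H^2(\Omega))}$ in~\eqref{eq: H2 regularity for v} controls $\norm{\nabla^2 v}_{L^2(I;L^2(\Omega))}$).

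For part~(b), fix an intermediate domain $\Omega_1$ with $\Om_0\subset\subset\Om_1\subset\subset\Om$ and apply Lemma~\ref{lemma: regularity local_Om_0} (after time reversal) with this pair and exponent $p$, which yields
\[
\norm{z_t}_{L^2(I;L^p(\Om_0))} + \norm{z}_{L^2(I;W^{2,p}(\Om_0))} \le Cp\bigl(\norm{u-\hat u}_{L^2(I;L^p(\Om_1))} + \norm{u-\hat u}_{L^2(I;L^2(\Om))}\bigr),
\]
with $C$ independent of $p$. The first term on the right is at most $\norm{u}_{L^2(I;L^p(\Omega))} + \norm{\hat u}_{L^2(I;L^p(\Omega_1))} \le Cp\,\norm{q}_{L^2(I)} + C\,\norm{\hat u}_{L^2(I;L^\infty(\Omega))}$ by Proposition~\ref{prop:reg_u} and the uniform embedding $L^\infty(\Omega_1)\hookrightarrow L^p(\Omega_1)$; the second term is bounded as in part~(a) by $C\bigl(\norm{q}_{L^2(I)}+\norm{\hat u}_{L^2(I;L^\infty(\Omega))}\bigr)$. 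Combining the two bounds produces the factor $p^2$ and the stated estimate.

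There is no serious obstacle here; the statement is essentially a bookkeeping exercise on top of Lemma~\ref{lemma: regularity local_Om_0} and Proposition~\ref{prop:reg_u}. The one point deserving care is the $p$-dependence: one power of $p$ comes from the interior regularity constant in Lemma~\ref{lemma: regularity local_Om_0}, a second from the $L^2(I;L^p(\Omega))$ bound on $u$ in Proposition~\ref{prop:reg_u}, and the contribution of $\hat u$ remains $p$-free only because $\hat u$ is bounded in space, so its $L^p(\Omega_1)$-norms are controlled uniformly in $p$. No relation between $p$ and the discretization parameters is needed.
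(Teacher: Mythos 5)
Your proposal is correct and follows essentially the same route as the paper: part (a) is the convex-domain $H^2$ estimate applied to $u-\hat u$ with Proposition~\ref{prop:reg_u} for $p=2$, and part (b) combines Lemma~\ref{lemma: regularity local_Om_0} (one factor of $p$) with the $L^2(I;L^p(\Omega))$ bound on $u$ from Proposition~\ref{prop:reg_u} (a second factor of $p$), the $\hat u$ contribution staying $p$-uniform via $L^\infty$. Your explicit mention of the intermediate domain $\Omega_1$ and the time reversal only spells out details the paper leaves implicit.
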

\begin{proof}
\begin{itemize}
\item[(a)]\ The right-hand side of the adjoint equation fulfills $u - \hat u \in L^2(I;L^p(\Omega))$ for all $1<p<\infty$, see Proposition~\ref{prop:reg_u}. Due to the convexity of the domain $\Omega$ we directly obtain  $z \in L^2(I;H^2(\Omega)\cap H^1_0(\Omega)) \cap H^1(I;L^2(\Omega))$ and the estimate
\[
\norm{\nabla^2 z}_{L^2(I;L^2(\Omega))} +  \norm{z_t}_{L^2(I;L^2(\Omega))} \le C \norm{u - \hat u}_{L^2(I;L^2(\Omega))}.
\]
The result from Proposition~\ref{prop:reg_u} leads directly to the first estimate.

\item[(b)]\
From Lemma~\ref{lemma: regularity local_Om_0} for $p\geq 2$ we have
\[
\norm{\nabla^2 z}_{L^2(I;L^p(\Om_0))} +  \norm{z_t}_{L^2(I;L^p(\Om_0))} \le Cp\norm{u-\hat u}_{L^2(I;L^p(\Omega))}.
\]
Hence, by the triangle inequality and Proposition~\ref{prop:reg_u} we obtain
$$
\|u - \hat u\|_{L^2(I;L^p(\Omega))} \le C\left(p \norm{q}_{L^2(I)} + \norm{\hat u}_{L^2(I;L^\infty(\Omega))}\right).
$$
That completes the proof.
\end{itemize}
\end{proof}

\section{Discretization and the best approximation type results} \label{sec: best approximation}

\subsection{Space-time discretization and notation}
For  discretization of the problem under the consideration
 we introduce a partitions of $I =[0,T]$ into subintervals $I_m = (t_{m-1}, t_m]$ of length $k_m = t_m-t_{m-1}$, where $0 = t_0 < t_1 <\cdots < t_{M-1} < t_M =T$. We assume that 
 \begin{equation}\label{eq: quasi assumption time}
 k_{m+1}\le\kappa k_m,\quad m=1,\dots,M-1,\quad \text{for some}\quad \kappa>0. 
 \end{equation}
 The maximal time step is denoted by $k =\max_{m} k_m$.
The semidiscrete space $\Xk$ of piecewise constant functions in time is defined by
\[
\Xk=\{v_{k}\in L^2(I;H^1_0(\Om)) :\ v_{k}\rvert_{I_m}\in \Ppol{0}(I_m;H^1_0(\Om)), \ m=1,2,\dots,M \},
\]
where $\Ppol{0}(I;V)$ is the space of constant functions in time with values in Banach space $V$.
We will employ the following notation for functions in $\Xk$
\begin{equation}\label{def: time jumps}
v^+_m=\lim_{\eps\to 0^+}v(t_m+\eps):=v_{m+1}, \quad v^-_m=\lim_{\eps\to 0^+}v(t_m-\eps)=v(t_m):=v_m, \quad [v]_m=v^+_m-v^-_m.
\end{equation}

Let $\mathcal{T}$  denote  a quasi-uniform triangulation of $\Om$  with a mesh size $h$, i.e., $\mathcal{T} = \{\tau\}$ is a partition of $\Om$ into triangles $\tau$ of diameter $h_\tau$ such that for $h=\max_{\tau} h_\tau$,
$$
\operatorname{diam}(\tau)\le h \le C |\tau|^{\frac{1}{2}}, \quad \forall \tau\in \mathcal{T}
$$
hold. Let $V_h$ be the set of all functions in $H^1_0(\Om)$ that are linear on each $\tau$, i.e. $V_h$ is the usual space of continuous piecewise linear finite elements.
 We will require the modified Cl\'{e}ment interpolant $i_h \colon L^1(\Om) \to V_h$ and the $L^2$-projection $P_h \colon L^2(\Omega) \to V_h$ defined by
\begin{equation}\label{eq:l2_proj}
(P_hv,\chi)_{\Om} = (v,\chi)_{\Om}, \quad \forall \chi\in V_h.
\end{equation}
To obtain the fully discrete approximation we consider the space-time finite element space
\begin{equation} \label{def: space_time}
\Xkh=\{v_{kh}\in \Xk :\ v_{kh}|_{I_m}\in \Ppol{0}(I_m;V_h), \ m=1,2,\dots,M \}.
\end{equation}
We will also need the following semidiscrete projection $\pi_k \colon C(\bar I; H^1_0(\Omega))\to \Xk$
defined by
\begin{equation}\label{eq:pi_k}
\pi_k v\rvert_{I_m}=v(t_{m}), \quad m=1,2,\dots, M,
\end{equation}
and the fully discrete projection $\pi_{kh}\colon C(\bar I; L^1(\Omega))\to \Xkh$ defined by $\pi_{kh}=i_h\pi_k$.

To introduce the dG(0)cG(1) discretization we define the following bilinear form
\begin{equation}\label{eq: bilinear form B}
 B(v,\varphi)=\sum_{m=1}^M \langle v_t,\varphi \rangle_{I_m \times \Omega}+(\na v,\na \varphi)_{\IOm}+\sum_{m=2}^M([v]_{m-1},\varphi_{m-1}^+)_\Om+(v_{0}^+,\varphi_{0}^+)_\Om,
\end{equation}
where $\langle \cdot,\cdot \rangle_{I_m \times \Omega}$ is the duality product between $ L^2(I_m;W^{-1,s}(\Omega))$ and $ L^2(I_m;W^{1,s'}_0(\Omega))$. We note, that the first sum vanishes for $v \in \Xk$.
Rearranging the terms, we obtain an equivalent (dual) expression for $B$:
\begin{equation}\label{eq:B_dual}
 B(v,\varphi)= - \sum_{m=1}^M \langle v,\varphi_t \rangle_{I_m \times \Omega}+ (\na v,\na \varphi)_{\IOm}-\sum_{m=1}^{M-1} (v_m^-,[\varphi_k]_m)_\Om + (v_M^-,\varphi_M^-)_\Om.
\end{equation}

In the two following theorems  we establish global and local best approximation type results along the curve for the  error between the solution $v$ of the auxiliary equation~\eqref{eq: heat equation} and its dG(0)cG(1) approximation $v_{kh} \in \Xkh$ defined as
\begin{equation}\label{eq: discrete heat with RHS}
B(v_{kh},\varphi_{kh})=(f,\varphi_{kh})_{\IOm} \quad \text{for all }\; \varphi_{kh}\in \Xkh.
\end{equation}
Since dG(0)cG(1) method is a consistent discretization we have the following Galerkin orthogonality relation:
\[
B(v-v_{kh},\varphi_{kh}) = 0 \quad \text{for all }\; \varphi_{kh}\in \Xkh.
\]

\subsection{Discretization of the curve and the weight function}

To define fully discrete optimization problem we will also require a discretization of the curve $\gamma$. We define $\gamma_k=\pi_k\gamma$ by
\begin{equation}\label{eq: discrete gamma}
    \gamma_k \rvert_{I_m}=\gamma(t_m):=\gamma_{k,m}\in \Om_0, \quad  m=1,2,\dots,M,
\end{equation}
i.e., $\gamma_k$ is a piecewise constant approximation of $\gamma$.
Next we introduce a weight function
\begin{equation}\label{eq: weight_sigma}
\sigma(t,x) = \sqrt{|x-\gamma(t)|^2+h^2}
\end{equation}
and a discrete piecewise constant in time approximation
\begin{equation}\label{eq: weight_sigma_k}
\sigma_k(t,x) = \sqrt{|x-\gamma_k(t)|^2+h^2}.
\end{equation}
Define
\begin{equation}\label{eq: weight_sigma_km}
\sigma_{k,m}:=\sigma_{k}\rvert_{I_m}=\sigma_k(t_m,x)=\sigma(t_m,x).
\end{equation}
One can easily check that $\sigma$ and $\sigma_k$ satisfy the following properties for any $(t,x)\in I\times \Om$,
\begin{subequations}
\begin{align}
\|\sigma^{-1}(t,\cdot)\|_{L^2(\Om)},\|\sigma_k^{-1}(t,\cdot)\|_{L^2(\Om)}&\le C\lh^{\frac{1}{2}},\quad t\in \bar{I}, \label{eq: property 1 of sigma_k}\\
|\na \sigma(t,x)|,\ |\na \sigma_k(t,x)|&\le C, \label{eq: property 2 of sigma_k}\\
|\na^2 \sigma_k(t,x)|&\le C| \sigma_k^{-1}(t,x)|,\label{eq: property 3 of sigma_k}\\
|\sigma_t(t,x)|&\le | \na\sigma(t,x)|\cdot|\gamma'(t)|\le CC_\gamma,\label{eq: property 4 of sigma}\\
\max_{x\in\tau}{\sigma}(x,t)&\le C\min_{x\in\tau}{\sigma}(x,t), \quad \forall \tau\in \mathcal{T}.\label{eq: property 5 of sigma}
\end{align}
\end{subequations}

\subsection{Global error estimate along the curve}
In this section we prove the following global approximation result.
\begin{theorem}[Global best approximation] \label{thm: global best approx}
Assume $v$ and $v_{kh}$ satisfy \eqref{eq: heat equation} and \eqref{eq: discrete heat with RHS}
respectively. Then there exists a constant $C$ independent of $k$ and $h$ such that for any $1\le p\le \infty$,
\begin{align*}
\int_I|(v-v_{kh})(t,\gamma_k(t))|^2dt &\le C \lh^2\times\\ 
&\inf_{\chi \in \Xkh} \left( \norm{v-\chi}^2_{L^2(I;L^\infty(\Omega))}  + h^{-\frac{4}{p}} \norm{\pi_k v-\chi}^2_{L^2(I;L^p(\Omega))} \right).
\end{align*}
\end{theorem}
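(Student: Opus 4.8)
The plan is to reduce the pointwise-in-space, $L^2$-in-time error on the discrete curve $\gamma_k$ to a weighted energy estimate for the dG(0)cG(1) scheme, using the weight $\sigma_k$ from \eqref{eq: weight_sigma_k} together with a discrete regularized Green's function (Dirac) approach. First I would fix $t$ and a point $\xi = \gamma_k(t) = \gamma_{k,m}$ on the interval $I_m$, and let $\delta_h^\xi \in V_h$ be a smoothed (discrete) delta, i.e. a function with $(\chi,\delta_h^\xi)_\Omega = \chi(\xi)$ for all $\chi \in V_h$, $\|\delta_h^\xi\|_{L^1(\Omega)}\le C$ and $\|\delta_h^\xi\|_{L^q(\Omega)}\le C h^{-2(1-1/q)}$. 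Since $(v-v_{kh})(t,\cdot)$ is not in $V_h$ I would instead write $(v-v_{kh})(t,\xi) = (P_h v - v_{kh})(t,\xi) + (v - P_h v)(t,\xi)$; the second term is controlled directly by the $L^\infty$ best-approximation term after inserting an arbitrary $\chi\in\Xkh$ and using a standard interpolation/stability estimate for $P_h$ in $L^\infty$ (which on quasi-uniform meshes costs only the $|\ln h|$ factor). So the crux is to bound $\int_I |(P_h v - v_{kh})(t,\gamma_k(t))|^2\,dt$, and on $I_m$ this equals $\int_{I_m}|(e_{kh}(t),\delta_h^{\gamma_{k,m}})_\Omega|^2\,dt$ where $e_{kh}:=P_h v - v_{kh}\in\Xkh$.

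Next I would introduce the discrete dual (adjoint) problem: given the right-hand side built from $e_{kh}$ concentrated via the smoothed deltas on each $I_m$, solve backwards in time $B(\varphi_{kh}, g_{kh}) = \sum_m \int_{I_m}(\varphi_{kh}(t),\delta_h^{\gamma_{k,m}})_\Omega (e_{kh}(t),\delta_h^{\gamma_{k,m}})_\Omega\,dt$ for $g_{kh}\in\Xkh$ — so that testing with $\varphi_{kh}=e_{kh}$ reproduces exactly $\int_I|(P_hv-v_{kh})(t,\gamma_k)|^2\,dt$. Using Galerkin orthogonality $B(v-v_{kh},g_{kh})=0$ and the $L^2$-projection identity, I can replace $e_{kh}$ by $P_h v - \chi$ for arbitrary $\chi\in\Xkh$ (after moving $P_h v - v$ into the $L^\infty$ term via the $L^1$-bound on $\delta_h$), arriving at something like $\int_I|(P_hv-v_{kh})(t,\gamma_k)|^2 \le B(P_hv - \chi, g_{kh}) \le \big(\text{weighted norms of } P_hv-\chi\big)\cdot\big(\text{dual-weighted norms of } g_{kh}\big)$. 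Here the weight $\sigma_k$ is inserted by writing each factor of $B$ as $(\sigma_k^{-1}\cdot)(\sigma_k\cdot)$; property \eqref{eq: property 1 of sigma_k} turns $\|\sigma_k^{-1}\|_{L^2(\Omega)}$ into the $|\ln h|^{1/2}$ factors, while $\|\sigma_k(P_hv-\chi)\|$ combined with $\|\sigma_k^{-1}\| $ and an inverse inequality on $\Xkh$ recovers the $h^{-2/p}$-scaled $L^p$ best-approximation term (estimating $\sigma_k\le\text{diam}(\Omega)$ crudely where the $L^\infty$ term already dominates, and near the curve using $\sigma_k\sim h$).

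The main obstacle — and the technical heart of the argument — is the \emph{weighted a priori stability estimate for the discrete dual solution} $g_{kh}$: one needs $\|\sigma_k^{-1}\nabla g_{kh}\|_{L^2(\IOm)}$, $\|\sigma_k^{-1}[g_{kh}]\|$ at the time nodes, and the relevant dual-norm quantities bounded by $C|\ln h|^{1/2}$ times the square root of the left-hand side, uniformly in $k,h$ with no mesh relation. This is exactly the kind of estimate that replaces the flawed step (3.33) of \cite{GongW_YanN_2016a}: it requires a careful weighted energy argument for dG(0)cG(1), handling the time-jump terms $([\,g_{kh}\,]_m,\cdot)_\Omega$, using the superconvergence/stability of $\pi_k$ and $i_h$, commuting $\sigma_k$ past $\Delta_h$ via properties \eqref{eq: property 2 of sigma_k}–\eqref{eq: property 3 of sigma_k} (the Hessian-of-$\sigma_k$ bound is what keeps the commutator controlled), and absorbing the time dependence of $\sigma_k$ through \eqref{eq: property 4 of sigma_k} — although for the \emph{global} estimate the $C_\gamma$-term can be absorbed since no sharp $k$-tracking is needed here. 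I would isolate this as a separate lemma (a weighted discrete smoothing/stability estimate), prove it by testing the discrete dual equation against $\sigma_k^{-2} g_{kh}$ (suitably interpolated back into $\Xkh$) and a duality/discrete-maximal-parabolic-regularity bootstrap, and then the theorem follows by collecting the pieces and taking the infimum over $\chi\in\Xkh$.
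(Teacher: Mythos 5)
Your overall architecture coincides with the paper's: a discrete regularized Green's function $g_{kh}$ driven by smoothed Dirac functions along $\gamma_k$, Galerkin orthogonality, and a weighted energy estimate for $g_{kh}$ with the weight $\sigma_k$ as the technical core. But the key lemma as you state it has the weights inverted, and this matters. You ask for $\|\sigma_k^{-1}\nabla g_{kh}\|_{L^2(I\times\Omega)}$ and $\|\sigma_k^{-1}[g_{kh}]_m\|_{L^2(\Omega)}$ to be bounded by $C\lh^{1/2}$ times the square root of the left-hand side. The weight must go the other way: $g_{kh}$ is a discrete Green's function concentrated at $\gamma_k$, so near the curve $\nabla g_{kh}$ already saturates the unweighted $L^2$ bound up to a logarithm, and multiplying by $\sigma_k^{-1}\sim h^{-1}$ there produces a quantity of order $h^{-1}$ rather than $\lh^{1/2}$; the estimate you posit is false. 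What is actually needed --- and what the paper proves in \eqref{eq: needed estimate for greens} together with Lemmas~\ref{lemma 1}--\ref{lemma 3} --- is
\begin{equation*}
\int_0^T\Bigl(\|\sigma_k\Delta_h g_{kh}\|^2_{L^2(\Om)}+\|\na g_{kh}\|^2_{L^2(\Om)}\Bigr)dt+\sum_{m=1}^M k_m^{-1}\|\sigma_{k,m}[g_{kh}]_m\|^2_{L^2(\Om)}\le C\lh\int_0^T|v_{kh}(t,\gamma_k(t))|^2dt,
\end{equation*}
i.e.\ the weight $\sigma_k$ (which is \emph{small}, of order $h$, near the curve) sits on $g_{kh}$, and the factor $\|\sigma_k^{-1}\|_{L^2(\Om)}\le C\lh^{1/2}$ from \eqref{eq: property 1 of sigma_k} is spent on splitting $L^1$-norms of quantities built from $g_{kh}$, e.g.\ $\|\Delta_h g_{kh}\|_{L^1(\Om)}\le\|\sigma_k^{-1}\|_{L^2(\Om)}\|\sigma_k\Delta_h g_{kh}\|_{L^2(\Om)}$. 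Relatedly, your sketch does not supply the mechanism by which the $\|v-\chi\|_{L^\infty}$ term arises from $(\na v,\na g_{kh})$: the paper integrates by parts elementwise and invokes Rannacher's estimate $\sum_\tau\|\jump{\pa_n g_{kh}}\|_{L^1(\pa\tau)}\le C\lh^{1/2}(\|\sigma_k\Delta_h g_{kh}\|_{L^2(\Om)}+\|\na g_{kh}\|_{L^2(\Om)})$, while the time-jump terms are estimated with an inverse inequality to produce the $h^{-2/p}\|\pi_k v\|_{L^p}$ contribution. Your generic splitting with $\sigma_k$ placed on $P_hv-\chi$ does not yield either term.

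Two smaller points. The function $e_{kh}=P_hv-v_{kh}$ is not in $\Xkh$ (it is not piecewise constant in time), so you cannot test the discrete dual equation with it as proposed; the paper avoids this by working with $v_{kh}$ itself, proving the bound for $\int_0^T|v_{kh}(t,\gamma_k(t))|^2dt$ in terms of $\|v\|_{L^2(I;L^\infty(\Om))}$ and $h^{-2/p}\|\pi_kv\|_{L^2(I;L^p(\Om))}$, and only at the end substituting $v-\chi$, $v_{kh}-\chi$ and applying the triangle inequality. The detour through $P_h$ and its $L^\infty$-stability is avoidable and adds nothing.
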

\begin{proof}
To establish the result we use a duality argument. First, we introduce a smoothed Delta function, which we will denote by $\tilde{\delta}_{\gamma_k}$. This function on each $I_m$ is defined as $\tilde{\delta}_{\gamma_{k,m}}$ and supported in one cell, which we denote by $\tau^0_m$, i.e.
$$
(\chi, \tilde{\delta}_{\gamma_{k,m}})_{\tau^0_m}=\chi(\gamma_{k,m})=\chi(\gamma(t_m)), \quad \forall \chi\in \mathbb{P}^1(\tau^0_m),\quad m=1,2,\dots,M.
$$
In addition we also have (see \cite[~Appendix]{SchatzAH_WahlbinLB_1995})
\begin{equation}\label{delta1}
 \|\tilde{\delta}_{\gamma_{k}}\|_{W^s_p(\Om)} \le C h^{-s-2(1-\frac{1}{p})}, \quad 1\le p \le \infty, \quad s=0,1.
\end{equation}
Thus in particular $\|\tilde{\delta}_{\gamma_k}\|_{L^1(\Om)} \le C$, $\|\tilde{\delta}_{\gamma_k}\|_{L^2(\Om)} \le Ch^{-1}$, and  $\|\tilde{\delta}_{\gamma_k}\|_{L^\infty(\Om)} \le Ch^{-2}$.

We define $g$ to be a solution to the following backward parabolic problem
\begin{equation}\label{eq: heat with dirac RHS}
\begin{aligned}
-g_t(t,x)-\Delta g(t,x) &= v_{kh}(t,\gamma_k(t))\tilde{\delta}_{\gamma_k}(x),& (t,x)\in \IOm,\;  \\
g(t,x)&=0, &(t,x) \in I\times\pa\Omega, \\
g(T,x)&=0, & x \in \Omega.
\end{aligned}
\end{equation}
There holds
$$
\begin{aligned}
\int_{I\times\Om}v_{kh}(t,\gamma_k(t))\tilde{\delta}_{\gamma_k}(x)\varphi_{kh}(t,x)dtdx &= \sum_{m=1}^M\int_{I_m}v_{kh}(t,\gamma_k(t))\left(\int_{\Om}\tilde{\delta}_{\gamma_k}(x)\varphi_{kh}(t,x)dx\right)dt\\
&=\sum_{m=1}^M\int_{I_m}v_{kh}(t,\gamma_k(t))\varphi_{kh}(t,\gamma_k(t))dt\\
&=\int_{I}v_{kh}(t,\gamma_k(t))\varphi_{kh}(t,\gamma_k(t))dt.
\end{aligned}
$$
Let $g_{kh} \in \Xkh$ be dG(0)cG(1) solution defined by
\begin{equation}\label{eq: discrete heat with dirac RHS}
B(\varphi_{kh}, g_{kh})=(v_{kh}(t,\gamma_k(t))\tilde{\delta}_{\gamma_k},\varphi_{kh})_{\IOm},\quad \forall \varphi_{kh}\in \Xkh.
\end{equation}
Then using that dG(0)cG(1) method is consistent, we have
\begin{equation}\label{eq: starting with B}
\begin{aligned}
\int_0^T|v_{kh}(t,\gamma_k(t))|^2dt &= B(v_{kh}, g_{kh})=B(v, g_{kh})\\
&=(\na  v, \na g_{kh})_{\IOm}-\sum_{m=1}^{M}(v_m,[g_{kh}]_m)_{\Om},
 \end{aligned}
\end{equation}
where we have used the dual expression \eqref{eq:B_dual} for the bilinear form $B$ and the fact that the last term in~\eqref{eq:B_dual} can be included in the sum by setting $g_{kh,M+1} = 0$ and defining consequently $[g_{kh}]_M = -g_{kh,M}$. The first sum in~\eqref{eq:B_dual} vanishes due to $g_{kh} \in \Xkh$.
For each $t$, integrating by parts elementwise and using that $g_{kh}$ is linear in the spacial variable, by the H\"{o}lder's inequality we have
\begin{equation}\label{eq: before jumps}
(\na  v, \na g_{kh})_{\Om}=\frac{1}{2}\sum_{\tau}(v, \jump{\pa_n g_{kh}})_{\pa\tau}\le C\|v\|_{L^\infty(\Om)}\sum_{\tau}\|\jump{\pa_n g_{kh}}\|_{L^1(\pa\tau)},
\end{equation}
where $\jump{\pa_n g_{kh}}$ denotes the jumps of the normal derivatives across the element faces.

From Lemma 2.4 in \cite{RannacherR_1991a} we have
$$
\sum_{\tau}\|\jump{\pa_n g_{kh}}\|_{L^1(\pa\tau)}\le C\lh^{\frac{1}{2}}\left(\|\sigma_k\Delta_h g_{kh}\|_{L^2(\Om)}+\|\na g_{kh}\|_{L^2(\Om)}\right),
$$
where  $\Delta_h \colon V_h \to V_h$ is the discrete Laplace operator, defined by
$$
-(\Delta_h v_h,\chi)_\Omega = (\na v_h,\na \chi)_\Omega,\quad \forall \chi\in V_h.
$$
To estimate the term involving the jumps in~\eqref{eq: starting with B}, we  first use the H\"{o}lder's inequality and the inverse estimate to obtain
\begin{equation}\label{eq: global jumps}
\sum_{m=1}^{M}(v_m,[g_{kh}]_m)_{\Om}\le C \sum_{m=1}^{M} k_m^{\frac{1}{2}}\|v_m\|_{L^p(\Om)}k_m^{-\frac{1}{2}}h^{-\frac{2}{p}}\|[g_{kh}]_m\|_{L^1(\Om)}.
 \end{equation}
Now we use the fact that the equation~\eqref{eq: discrete heat with dirac RHS} can be rewritten on the each time level as
$$
(\na  \varphi_{kh}, \na g_{kh})_{\ImOm}-(\varphi_{kh,m},[g_{kh}]_m)_{\Om}=(v_{kh}(t,\gamma_k(t))\tilde{\delta}_{\gamma_k},\varphi_{kh})_{\ImOm},
$$
or equivalently as
\begin{equation}\label{eq:gkh_m}
-k_m\Delta_h g_{kh,m}-[g_{kh}]_m=k_m v_{kh,m}(\gamma_{k,m})P_h\tilde{\delta}_{\gamma_{k,m}},
\end{equation}
where $P_h$ is the $L^2$-projection, see~\eqref{eq:l2_proj}.
From \eqref{eq:gkh_m} by the triangle inequality, we obtain
$$
\|[g_{kh}]_m\|_{L^1(\Om)}\le k_m\|\Delta_h g_{kh,m}\|_{L^1(\Om)}+k_m\|P_h\tilde{\delta}_{\gamma_{k,m}}\|_{L^1(\Om)} \abs{v_{kh,m}(\gamma_{k,m})}.
$$
Using that the $L^2$-projection is stable in $L^1$-norm (cf. \cite{CrouzeixM_ThomeeV_1987a}), we have
$$
\|P_h\tilde{\delta}_{\gamma_{k,m}}\|_{L^1(\Om)}\le C\|\tilde{\delta}_{\gamma_{k,m}}\|_{L^1(\Om)}\le C.
$$
Inserting the above estimate into \eqref{eq: global jumps} and using \eqref{eq: property 1 of sigma_k}, we obtain
\begin{align*}
&\sum_{m=1}^M(v_m,[g_{kh}]_m)_{\Om}\le Ch^{-\frac{2}{p}}\sum_{m=1}^M k_m^{\frac{1}{2}}\|v_m\|_{L^p(\Om)}k_m^{\frac{1}{2}}\left(\|\Delta_h g_{kh,m}\|_{L^1(\Om)}+|v_{kh,m}(\gamma_{k,m})|\right)\\
&\le Ch^{-\frac{2}{p}}\left(\sum_{m=1}^M k_m\|v_m\|^2_{L^p(\Om)}\right)^{\frac{1}{2}}\left(\sum_{m=1}^Mk_m\|\Delta_h g_{kh,m}\|^2_{L^1(\Om)}+k_m|v_{kh,m}(\gamma_{k,m})|^2\right)^{\frac{1}{2}}\\
&\le Ch^{-\frac{2}{p}}\|\pi_k v\|_{L^2(I; L^p(\Om))}\left(\int_0^T\lh\|\sigma_k\Delta_h g_{kh}\|^2_{L^2(\Om)}+|v_{kh}(t,\gamma_{k}(t))|^2dt\right)^{\frac{1}{2}}.
 \end{align*}
Combining \eqref{eq: starting with B} and \eqref{eq: before jumps} with the above estimates we have
\begin{equation}\label{eq: u_kh plus others}
\begin{aligned}
&\int_0^T|v_{kh}(t,\gamma_{k}(t))|^2dt\le C\lh^{\frac{1}{2}}\left(\|v\|_{L^2(I;L^\infty(\Om))}+h^{-\frac{2}{p}}\|\pi_k v\|_{L^2(I; L^p(\Om))}\right)\times\\
&\left(\int_0^T\|\sigma_k\Delta_h g_{kh}\|^2_{L^2(\Om)}+\|\na g_{kh}\|^2_{L^2(\Om)}+|v_{kh}(t,\gamma_{k}(t))|^2dt\right)^{\frac{1}{2}}.
 \end{aligned}
 \end{equation}
To complete the proof of the theorem it is sufficient to show
\begin{equation}\label{eq: needed estimate for greens}
\int_0^T\left(\|\sigma_k\Delta_h g_{kh}\|^2_{L^2(\Om)}+\|\na g_{kh}\|^2_{L^2(\Om)}\right)dt\le C|\ln{h}|\int_0^T|v_{kh}(t,\gamma_{k}(t))|^2dt.
\end{equation}
Then from \eqref{eq: u_kh plus others} and \eqref {eq: needed estimate for greens} it would follow that
$$
\int_0^T|v_{kh}(t,\gamma_{k}(t))|^2dt\le C\lh^2\left(\|v\|^2_{L^2(I;L^\infty(\Om))}+h^{-\frac{4}{p}}\|\pi_k v\|^2_{L^2(I; L^p(\Om))}\right).
$$
Then using that the dG(0)cG(1) method is invariant on $\Xkh$,  by replacing $v$ an $v_{kh}$ with $v-\chi$ and $v_{kh}-\chi$ for any $\chi\in Xkh$, we obtain Theorem \ref{thm: global best approx}.

The estimate \eqref{eq: needed estimate for greens} will follow from the series of lemmas. The first lemma treats the term $\norm{\sigma_k\Delta_h g_{kh}}^2_{L^2(I;L^2(\Om))}$.
\begin{lemma}\label{lemma 1}
For any $\eps>0$ there exists $C_\eps$ such that
\begin{align*}
\int_0^T \|\sigma_k\Delta_h g_{kh}\|^2_{L^2(\Om)}dt &\le C_\eps \int_0^T\left(|v_{kh}(t,\gamma_{k}(t))|^2+\|\na g_{kh}\|^2_{L^2(\Om)}\right)dt\\
&\quad+\eps\sum_{m=1}^M k_m^{-1} \|\sigma_{k,m}[g_{kh}]_m\|^2_{L^2(\Om)},
\end{align*}
where  $\sigma_{k}$ and $\sigma_{k,m}$ are defined  in \eqref{eq: weight_sigma_k} and \eqref{eq: weight_sigma_km}, respectively.
\end{lemma}
\begin{proof}
The equation \eqref{eq: discrete heat with dirac RHS} for each time interval $I_m$ can be rewritten as~\eqref{eq:gkh_m}.
Multiplying~\eqref{eq:gkh_m} with $\varphi=-\sigma^2_{k}\Delta_h g_{kh}$ and integrating over $I_m\times \Om$, we have
\begin{align*}
\int_{I_m}&\|\sigma_{k,m}\Delta_h g_{kh}\|^2_{L^2(\Om)}dt\\
&=-([g_{kh}]_m,\sigma^2_{k,m}\Delta_h g_{kh,m})_\Om-(v_{kh}(t,\gamma_{k,m})P_h\tilde{\delta}_{\gamma_k},\sigma^2_{k,m}\Delta_h g_{kh})_{\ImOm}\\
&=-(P_h(\sigma^2_{k,m}[g_{kh}]_m),\Delta_h g_{kh,m})_\Om-(v_{kh}(t,\gamma_{k,m})P_h\tilde{\delta}_{\gamma_k},\sigma^2_{k,m}\Delta_h g_{kh})_{\ImOm}\\
&=(\na (\sigma^2_{k,m}[g_{kh}]_m),\na g_{kh,m})_\Om+(\na(P_h-I)(\sigma^2_{k,m}[g_{kh}]_m),\na g_{kh,m})_\Om\\
&\quad -(v_{kh}(t,\gamma_{k,m})P_h\tilde{\delta}_{\gamma_k},\sigma^2_{k,m}\Delta_h g_{kh})_{\ImOm}\\
&=J_1+J_2+J_3.
\end{align*}
We have
\begin{align*}
J_1 &= 2(\sigma_{k,m}\na\sigma_{k,m} [g_{kh}]_m,\na g_{kh,m})_\Om + (\sigma_{k,m} [\na g_{kh}]_m,\sigma_{k,m} \na g_{kh,m})_\Om=J_{11}+J_{12}.
\end{align*}
By the Cauchy-Schwarz inequality and using \eqref{eq: property 2 of sigma_k} we get
$$
J_{11}\le C\|\sigma_{k,m} [g_{kh}]_m\|_{L^2(\Om)}\|\na g_{kh,m}\|_{L^2(\Om)}.
$$
On the other hand we have
\begin{align*}
J_{12}&=([\sigma_{k}\na g_{kh}]_m,\sigma_{k,m} \na g_{kh,m})_\Om+((\sigma_{k,m}-\sigma_{k,m+1})\na g_{kh,m+1},\sigma_{k,m} \na g_{kh,m})_\Om\\&=J_{121}+J_{122}.
\end{align*}
Using the identity
\begin{equation}\label{eq: jumps identity}
([w_{kh}]_m,w_{kh,m})_\Om=\frac{1}{2}\| w_{kh,m+1}\|^2_{L^2(\Om)}-\frac{1}{2}\|w_{kh,m}\|^2_{L^2(\Om)}-\frac{1}{2}\| [w_{kh}]_m\|^2_{L^2(\Om)},
\end{equation}
we have
$$
J_{121}=\frac{1}{2}\|\sigma_{k,m+1} \na g_{kh,m+1}\|^2_{L^2(\Om)}-\frac{1}{2}\|\sigma_{k,m} \na g_{kh,m}\|^2_{L^2(\Om)}-\frac{1}{2}\| [\sigma_k\na g_{kh}]_m\|^2_{L^2(\Om)}.
$$
By the Cauchy-Schwarz inequality, we obtain
$$
\begin{aligned}
J_{122}&\le \|(\sigma_{k,m}-\sigma_{k,m+1})\na g_{kh,m+1}\|_{L^2(\Om)}\|\sigma_{k,m} \na g_{kh,m}\|_{L^2(\Om)}\\
&\le CC_\gamma k_m\|\na g_{kh,m+1}\|_{L^2(\Om)}\|\sigma_{k,m} \na g_{kh,m}\|_{L^2(\Om)},
\end{aligned}
$$
where in the last step we used that from \eqref{eq: property 4 of sigma}
$$
|\sigma_{k,m}(x)-\sigma_{k,m+1}(x)|=|\sigma(t_m,x)-\sigma(t_{m+1},x)|\le Ck_m| \sigma_t(\tilde{t},x)|\le CC_\gamma k_m,
$$
for some $\tilde{t}\in I_m$.
Using the Young's inequality for $J_{11}$, neglecting  $-\frac{1}{2}\| [\sigma_k\na g_{kh}]_m\|^2_{L^2(\Om)}$, and using the assumption on the time steps $k_m\le \kappa k_{m+1}$ and that $\si_k\le C$, we obtain
\begin{equation}\label{eq: J_1 thm Global error}
\begin{aligned}
 J_1 &\le \frac{1}{2}\| \sigma_{k,m+1} \na g_{kh,m+1}\|^2_{L^2(\Om)}-\frac{1}{2}\|\sigma_{k,m} \na g_{kh,m}\|^2_{L^2(\Om)} +\frac{\eps}{k_m} \|\sigma_{k,m} [g_{kh}]_m\|^2_{L^2(\Om)}\\
&\quad+ C_\eps k_m\|\na g_{kh,m}\|^2_{L^2(\Om)}+Ck_{m+1}\|\na g_{kh,m+1}\|^2_{L^2(\Om)}.
\end{aligned}
\end{equation}
To estimate $J_2$, first by the Cauchy-Schwarz inequality and the approximation theory we have
$$
J_2= \sum_\tau (\na(P_h-I)(\sigma^2_{k,m}[g_{kh}]_m),\na g_{kh,m})_\tau
\le Ch\sum_\tau\|\na^2(\sigma^2_{k,m}[g_{kh}]_m)\|_{L^2(\tau)}\|\na g_{kh,m}\|_{L^2(\tau)}.
$$
Using that $g_{kh}$ is piecewise linear we have
$$
\na^2(\sigma^2[g_{kh}]_m)= \na^2(\sigma^2)[g_{kh}]_m+\na(\sigma^2)\cdot\na [g_{kh}]_m \quad \text{on }\; \tau.
$$
There holds $\pa_{ij}(\sigma^2)=2(\pa_i \sigma)(\pa_j \sigma)+2\sigma\pa_{ij}\sigma$ and $\na(\sigma^2) = 2 \sigma \nabla \sigma$. Thus by the properties of $\sigma$~\eqref{eq: property 2 of sigma_k} and~\eqref{eq: property 3 of sigma_k},
we have
\[
\abs{\na^2(\sigma^2)} \le C \quad\text{and}\quad \abs{\na(\sigma^2)} \le C\, \sigma.
\]
Same estimates hold for $\sigma_k$.
Using these estimates, the fact that $h\le \sigma_k$ and the inverse inequality (in view of \eqref{eq: property 5 of sigma} the inverise inequality is valid with $\sigma$ inside the norm), we obtain
\begin{equation}\label{eq: estimates for J_2}
\begin{aligned}
J_2&\le C\sum_\tau\left(h\|[g_{kh}]_m\|_{L^2(\tau)}+h\|\sigma_{k,m}\na [g_{kh}]_m\|_{L^2(\tau)}\right)\|\na g_{kh,m}\|_{L^2(\tau)}\\
& \le C\sum_\tau\left(\|\sigma_{k,m}[g_{kh}]_m\|_{L^2(\tau)}+C_{inv}\|\sigma_{k,m} [g_{kh}]_m\|_{L^2(\tau)}\right)\|\na g_{kh,m}\|_{L^2(\tau)}\\
&\le  C\sum_\tau\|\sigma_{k,m}[g_{kh}]_m\|_{L^2(\tau)}\|\na g_{kh,m}\|_{L^2(\tau)}\\
&\le C_\eps k_m\|\na g_{kh,m}\|^2_{L^2(\Om)}+\frac{\eps}{k_m}\|\sigma_{k,m} [g_{kh}]_m\|^2_{L^2(\Om)}.
\end{aligned}
\end{equation}
To estimate $J_3$ we first notice that
\begin{equation}\label{eq: estimates sigmaP_hdelta}
\|\sigma_k P_h\tilde{\delta}_{\gamma_k}\|_{L^2(\Om)}\le C.
\end{equation}
The proof is identical to the proof  of $(3.21)$ in \cite{LeykekhmanD_VexlerB_2013}.

 By the Cauchy-Schwarz inequality, \eqref{eq: estimates sigmaP_hdelta}, and the Young's inequality, we obtain
\begin{equation}\label{eq: estimates for J_3}
J_3\le C\int_{I_m}|v_{kh}(t,\gamma_k)|^2dt+\frac{1}{2}\int_{I_m}\|\sigma_{k,m}\Delta_h g_{kh,m}\|^2_{L^2(\Om)}dt.
\end{equation}
Using the  estimates \eqref{eq: J_1 thm Global error}, \eqref{eq: estimates for J_2}, and \eqref{eq: estimates for J_3}  we have
$$
\begin{aligned}
\int_{I_m}\|\sigma_{k,m}\Delta_h g_{kh}\|^2_{L^2(\Om)}dt &\le C_\eps\int_{I_m}\left(|v_{kh}(t,\gamma_k(t))|^2+\|\na g_{kh}\|^2_{L^2(\Om)}\right)dt\\
&+CC_\gamma\int_{I_{m+1}}\|\na g_{kh}\|^2_{L^2(\Om)}dt+\frac{\eps}{k_m}\|\sigma_{k,m}[g_{kh}]_m\|^2_{L^2(\Om)}\\
&+\frac{1}{2}\|\sigma_{k,m+1} \na g_{kh,m+1}\|^2_{L^2(\Om)}-\frac{1}{2}\|\sigma_{k,m} \na g_{kh,m}\|^2_{L^2(\Om)}.
\end{aligned}
$$
Summing over $m$ and using that $g_{kh,M+1}=0$ we obtain the lemma.
\end{proof}

The second lemma treats the term involving jumps.
\begin{lemma}\label{lemma 2}
There exists a constant $C$ such that
$$
\sum_{m=1}^M k_m^{-1} \|\sigma_{k,m}[g_{kh}]_m\|^2_{L^2(\Om)}\le C\int_0^T\left(\|\sigma_k\Delta_h g_{kh}\|^2_{L^2(\Om)}+|v_{kh}(t,\gamma_k(t)|^2\right)dt.
$$
\end{lemma}
\begin{proof}
We test~\eqref{eq:gkh_m} with $\varphi\rvert_{I_m}=\sigma^2_{k,m}[g_{kh}]_m$ and obtain
\begin{equation}\label{eq: terms in Lemma 2}
\begin{aligned}
\|\sigma_{k,m}[g_{kh}]_m\|^2_{L^2(\Om)} =& -(\Delta_h g_{kh},\sigma^2_{k,m}[g_{kh}]_m)_{\ImOm}\\
&-(v_{kh}(t,\gamma_k(t))P_h\tilde{\delta}_{\gamma_k},\sigma^2_{k,m}[g_{kh}]_m)_{\ImOm}.
\end{aligned}
\end{equation}
The first term on the right hand side of \eqref{eq: terms in Lemma 2} using the Young's inequality can be  estimated as
$$
(\Delta_h g_{kh},\sigma^2_{k,m}[g_{kh}]_m)_{\ImOm}\le Ck_m\int_{I_m}\|\sigma_{k,m}\Delta_h g_{kh}\|^2_{L^2(\Om)}dt+\frac{1}{4}\|\sigma_{k,m}[g_{kh}]_m\|^2_{L^2(\Om)}.
$$
The last term on the right hand side of \eqref{eq: terms in Lemma 2} can easily be estimated using~\eqref{eq: estimates sigmaP_hdelta} as
$$
(v_{kh}(t,\gamma_{k,m})P_h\tilde{\delta}_{\gamma_k},\sigma^2_{k,m}[g_{kh}]_m)_{\ImOm}\le Ck_m\int_{I_m}|v_{kh}(t,\gamma_k(t))|^2dt+\frac{1}{4}\|\sigma_{k,m}[g_{kh}]_m\|^2_{L^2(\Om)}.
$$
Combining the above two estimates we obtain
$$
\|\sigma_{k,m}[g_{kh}]_m\|^2_{L^2(\Om)}\le Ck_m \int_{I_m}\left( \|\sigma_{k,m}\Delta_h g_{kh}\|^2_{L^2(\Om)}+|v_{kh}(t,\gamma_k(t))|^2\right)dt.
$$
Summing over $m$ we obtain the lemma.
\end{proof}

\begin{lemma}\label{lemma 3}
There exists a constant $C$ such that
$$
\norm{\na g_{kh}}^2_{L^2(I\times\Om)} \le C\lh\int_0^T |v_{kh}(t,\gamma_k(t))|^2dt.
$$
\end{lemma}
\begin{proof}
Adding the primal~\eqref{eq: bilinear form B} and the dual~\eqref{eq:B_dual} representation of the bilinear form $B(\cdot,\cdot)$ one immediately arrives at
\[
\norm{\nabla v}^2_{L^2(I\times\Om)} \le B(v,v) \quad \text{for all }\; v \in \Xk,
\]
see e.g.,~\cite{MeidnerD_VexlerB_2008a}. Applying this inequality together with the discrete Sobolev inequality, see~\cite[~Lemma 4.9.2]{BrennerSC_ScottLR_2008}, results in
\[
\begin{aligned}
\norm{\nabla g_{kh}}^2_{L^2(I\times\Om)} &\le B(g_{kh},g_{kh}) = (v_{kh}(t,\gamma_k(t))\tilde{\delta}_{\gamma_k},g_{kh})_{\IOm}\\& = \int_0^T v_{kh}(t,\gamma_k(t)) g_{kh}(t,\gamma_k(t)) \, dt\\
&\le \left(\int_0^T \abs{v_{kh}(t,\gamma_k(t))}^2\,dt\right)^{\frac{1}{2}}  \left(\int_0^T \abs{g_{kh}(t,\gamma_k(t))}^2\,dt\right)^{\frac{1}{2}}\\
&\le \left(\int_0^T \abs{v_{kh}(t,\gamma_k(t))}^2\,dt\right)^{\frac{1}{2}} \, \norm{g_{kh}}_{L^2(I;L^\infty(\Omega))}\\
&\le c \lh^{\frac{1}{2}} \left(\int_0^T \abs{v_{kh}(t,\gamma_k(t))}^2\,dt\right)^{\frac{1}{2}} \, \norm{\nabla g_{kh}}_{L^2(I\times\Om)}.
\end{aligned}
\]
This gives the desired estimate.
\end{proof}

We proceed with the proof of Theorem~\ref{thm: global best approx}. From Lemma~\ref{lemma 1}, Lemma~\ref{lemma 2}, and Lemma~\ref{lemma 3}. It follows that
\begin{multline*}
\int_0^T\left(\|\sigma_k\Delta_h g_{kh}\|^2_{L^2(\Om)}+\|\na g_{kh}\|^2_{L^2(\Om)}\right)dt\le C_\eps\lh\int_0^T|v_{kh}(t,\gamma_k(t))|^2dt \\+C\eps\int_0^T\|\sigma_k\Delta_h g_{kh}\|^2_{L^2(\Om)}dt.
\end{multline*}
Taking $\eps$ sufficiently small we have \eqref{eq: needed estimate for greens}.
From \eqref{eq: u_kh plus others} we can conclude that
\begin{equation*}
\int_0^T|v_{kh}(t,\gamma_k(t))|^2dt\le C\lh^2
\left(\|v\|^2_{L^2(I;L^\infty(\Om))}+h^{-\frac{4}{p}}\|\pi_k v\|^2_{L^2(I; L^p(\Om))}\right),
 \end{equation*}
 for some constant $C$ independent of $h$ and $k$.
Using that dG(0)cG(1) method is invariant on $\Xkh$, by replacing $v$ and $v_{kh}$ with $v-\chi$ and $v_{kh}-\chi$ for any $\chi\in \Xkh$,
 we obtain
\begin{equation*}
\int_0^T|(v_{kh}-\chi)(t,\gamma_k(t))|^2dt\le C\lh^2
\left(\|v-\chi\|^2_{L^2(I;L^\infty(\Om))}+h^{-\frac{4}{p}}\|\pi_k v-\chi\|^2_{L^2(I; L^p(\Om))}\right).
 \end{equation*}
By the triangle inequality and the above estimate we deduce
\begin{align*}
\int_0^T|(v-v_{kh})(t,\gamma_k(t))|^2dt&\le \int_0^T|(v_{kh}-\chi)(t,\gamma_k(t))|^2dt +\int_0^T|(v-\chi)(t,\gamma_k(t))|^2dt\\
&\le C\lh^2
\left(\|v-\chi\|^2_{L^2(I;L^\infty(\Om))}+h^{-\frac{4}{p}}\|\pi_k v-\chi\|^2_{L^2(I; L^p(\Om))}\right).
 \end{align*}
Taking the infimum over $\chi$, we obtain Theorem~\ref{thm: global best approx}.
\end{proof}

\subsection{Interior error estimate}

To obtain optimal error estimates we will also require the following interior result.

\begin{theorem}[Interior approximation] \label{thm: apriori local}
Let $B_{d,m}:=B_d(\gamma(t_m))$ denote a ball of radius $d$ centered at $\gamma(t_m)$.
Assume $v$ and $v_{kh}$ satisfy \eqref{eq: heat equation} and \eqref{eq: discrete heat with RHS} respectively and let  $d>4h$. Then there exists a constant $C$ independent of $h$, $k$ and $d$ such that for any $1\le p\le \infty$
\begin{multline}\label{eq:local_v_vkh}
\int_{0}^T|(v-v_{kh})(t,\gamma_k(t))|^2dt\\ \le C \lh^2\inf_{\chi \in \Xkh}\Biggl\{\sum_{m=1}^M\left(\|v-\chi\|^2_{L^2(I_m;L^\infty(B_{d,m}))}+h^{-\frac{4}{p}}\|\pi_{k}v-\chi\|^2_{L^2(I_m;L^p(B_{d,m}))}\right) \\
 +d^{-2} \left(\norm{v-\chi}^2_{L^2(I;L^2(\Om))}+\norm{\pi_{k}v-\chi}^2_{L^2(I;L^2(\Om))}+h^2\norm{\na(v-\chi)}^2_{L^2(I;L^2(\Om))}\right)\Biggr\}.
\end{multline}
\end{theorem}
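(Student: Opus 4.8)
The plan is to reduce the interior estimate to the global one (Theorem~\ref{thm: global best approx}) plus an interior negative-norm/duality argument that captures the decay of the discrete Green's function away from the source. As in the proof of Theorem~\ref{thm: global best approx}, I would run the duality argument with the smoothed Dirac $\tilde\delta_{\gamma_k}$ and the backward dual solution $g$ solving \eqref{eq: heat with dirac RHS} with $g_{kh}\in\Xkh$ its dG(0)cG(1) approximation. The starting identity
$$
\int_0^T|v_{kh}(t,\gamma_k(t))|^2\,dt = B(v,g_{kh}) = B(v-\chi,g_{kh})
$$
for arbitrary $\chi\in\Xkh$ holds by Galerkin orthogonality. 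The point is to split the region of integration in the bilinear form into the near field $B_{d,m}$ and the far field $\Om\setminus B_{d/2,m}$ (with a smooth cutoff), estimating the near-field contribution exactly as in Theorem~\ref{thm: global best approx} — producing the first line of \eqref{eq:local_v_vkh} and the factor $\lh^2$ — while bounding the far-field contribution by the $L^2$-type terms on the right-hand side, with the $d^{-2}$ weight coming from differentiating the cutoff.

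The key technical input is an \emph{interior} weighted stability estimate for $g_{kh}$: if $\omega$ is a smooth cutoff equal to $1$ outside $B_{d,m}$ and vanishing on $B_{d/2,m}$, then
$$
\int_0^T\left(\|\omega\,\na g_{kh}\|^2_{L^2(\Om)}+\|\omega\,\sigma_k\Delta_h g_{kh}\|^2_{L^2(\Om)}\right)dt
\;\le\; C\,d^{-2}\lh\int_0^T|v_{kh}(t,\gamma_k(t))|^2\,dt,
$$
together with the already-established global bound \eqref{eq: needed estimate for greens}. This is obtained by repeating the energy arguments of Lemma~\ref{lemma 1}, Lemma~\ref{lemma 2}, Lemma~\ref{lemma 3} with the weight $\sigma_k$ replaced by $\omega\sigma_k$; the commutator terms generated by moving $\omega$ past $\Delta_h$ and past the $L^2$-projection $P_h$ are lower order and absorbed, and because $\tilde\delta_{\gamma_k}$ is supported in the single cell $\tau^0_m\subset B_{d/2,m}$ (here $d>4h$ is used, so the cell lies well inside the hole of $\omega$), the source term $J_3$-analogue simply drops out in the far field. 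The $d^{-2}$ factor is produced by $|\na\omega|\le Cd^{-1}$ in exactly the places where $|\na\sigma_k|\le C$ was used before. Feeding this back into the far-field part of $B(v-\chi,g_{kh})$ — after integrating by parts elementwise as in \eqref{eq: before jumps} and controlling the element-face jumps via Lemma~2.4 of \cite{RannacherR_1991a} applied with the weight $\omega\sigma_k$ — gives a bound by $d^{-1}\lh(\|v-\chi\|_{L^2(L^2)}+\cdots)$ times $(\int_0^T|v_{kh}(t,\gamma_k(t))|^2dt)^{1/2}$, and the $h^2\|\na(v-\chi)\|^2_{L^2(L^2)}$ term appears from estimating $(\na(v-\chi),\na g_{kh})$ directly on the far field without integrating by parts.

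Having the two contributions, I would combine them with \eqref{eq: u_kh plus others}-type absorption: the near field yields
$$
C\lh^{1/2}\Big(\sum_m\|v-\chi\|_{L^2(I_m;L^\infty(B_{d,m}))}+h^{-2/p}\|\pi_kv-\chi\|_{L^2(I_m;L^p(B_{d,m}))}\Big)\Big(\int_0^T|v_{kh}(t,\gamma_k)|^2\Big)^{1/2},
$$
the far field the analogous expression with the $d^{-1}$-weighted $L^2$ norms, and then a Young's inequality absorbs the factor $\int_0^T|v_{kh}(t,\gamma_k(t))|^2dt$ on the left. Finally, invariance of the method on $\Xkh$ lets me replace $v,v_{kh}$ by $v-\chi,v_{kh}-\chi$ and the triangle inequality $|(v-v_{kh})(t,\gamma_k)|\le|(v-\chi)(t,\gamma_k)|+|(v_{kh}-\chi)(t,\gamma_k)|$ (the first term controlled by $\|v-\chi\|_{L^\infty(B_{d,m})}$ pointwise) finishes the estimate; taking the infimum over $\chi$ gives \eqref{eq:local_v_vkh}. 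The main obstacle I anticipate is the bookkeeping in the weighted versions of Lemmas~\ref{lemma 1}--\ref{lemma 3}: one must check that every commutator $[\omega,\Delta_h]$, $[\omega,P_h]$, and each term where $\na\omega$ lands, is genuinely lower order in $d$ (i.e. carries at least one power of $d^{-1}$ and no uncontrolled power of $h^{-1}$), using the inverse inequality, $h\le\sigma_k$, property \eqref{eq: property 5 of sigma}, and the quasi-uniformity of the time steps \eqref{eq: quasi assumption time} — exactly the kind of delicate superapproximation argument where the flaw in \cite{GongW_YanN_2016a} is claimed to lie, so it must be done carefully.
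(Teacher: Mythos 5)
Your overall architecture coincides with the paper's: duality with the regularized Dirac along $\gamma_k$, a cut-off splitting $v=\om v+(1-\om)v$ into near and far fields, the global Theorem~\ref{thm: global best approx} applied to the near field (via the discrete Galerkin approximation of $\om v$), and the weighted quantities $\|\sigma_k\De_h g_{kh}\|_{L^2}$, $\sum_m k_m^{-1}\|\sigma_{k,m}[g_{kh}]_m\|^2$ for the far field. However, your treatment of the far field has concrete gaps. First, your ``starting identity'' $B(v,g_{kh})=B(v-\chi,g_{kh})$ is false: Galerkin orthogonality holds in the \emph{first} argument for $v-v_{kh}$, not for subtracting an arbitrary $\chi\in\Xkh$, since $B(\chi,g_{kh})=\int_I v_{kh}(t,\gamma_k(t))\chi(t,\gamma_k(t))\,dt\neq 0$ in general. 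The paper works with $v$ and $v_{kh}$ throughout and only introduces $\chi$ at the very end via the invariance of the scheme (which you also mention, so this is repairable, but the identity as written cannot be the starting point).

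Second, and more seriously, your mechanism for producing the term $d^{-2}h^2\|\na(v-\chi)\|^2_{L^2(I;L^2(\Om))}$ does not work: estimating $(\na((1-\om)v),\na g_{kh})$ ``directly without integrating by parts'' gives $\|\na((1-\om)v)\|_{L^2}\|\na g_{kh}\|_{L^2}$, and by Lemma~\ref{lemma 3} the second factor carries no power of $h$ and no power of $d^{-1}$; you would end up with $\|\na(v-\chi)\|^2_{L^2(I;L^2(\Om))}$ \emph{without} the $h^2d^{-2}$ prefactor, which is a different (and for the application useless) bound. The paper instead passes to the Ritz projection, writes $(\na R_h\psi,\na g_{kh})=-(R_h\psi,\De_h g_{kh})$, splits $\Om$ into $B_{d/4,m}$ and its complement, uses the Schatz--Wahlbin interior pointwise estimate to control $\|R_h\psi\|_{L^\infty(B_{d/4,m})}$ by $d^{-1}\|R_h\psi\|_{L^2}$ (exploiting that $\psi$ vanishes on $B_{d/2,m}$ even though $R_h\psi$ does not), uses $\sigma_{k,m}\ge d/4$ off $B_{d/4,m}$, and recovers the $h\|\na\psi\|$ contribution solely from the elliptic error $\|\psi-R_h\psi\|\le ch\|\na\psi\|$. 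This Ritz-projection step is also what converts the far-field pairing into $L^2$ norms of $v-\chi$ rather than $L^\infty$ norms; your plan (elementwise integration by parts as in \eqref{eq: before jumps} plus Rannacher's jump lemma) would leave you with $\|v-\chi\|_{L^2(I;L^\infty(\Om\setminus B_{d/2,m}))}$, which is not the stated right-hand side. Finally, the interior weighted stability estimate you propose to prove by redoing Lemmas~\ref{lemma 1}--\ref{lemma 3} with weight $\om\sigma_k$ is unnecessary: the already-established global bounds \eqref{eq: needed estimate for greens} and Lemma~\ref{lemma 2} suffice, because on the support of the far-field test function one simply has $\sigma_{k,m}\gtrsim d$, which is where the factor $d^{-1}$ comes from; the superapproximation/commutator analysis you anticipate as the main obstacle never has to be carried out.
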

\begin{proof}
To obtain the interior estimate we introduce a smooth cut-off function $\omega$ in space and piecewise constant in time, such that $\omega_m:=\omega\rvert_{I_m}$,
\begin{subequations} \label{def: properties of omega a local}
\begin{align}
\omega_m(x)&\equiv 1,\quad x\in B_{d/2,m} \label{eq: property 1 of omega_a local}\\
\omega_m(x)&\equiv 0,\quad x\in \Om\setminus B_{d,m} \label{eq: property 2 of omega_a local}\\
 |\na \omega_m|&\le Cd^{-1}, \quad |\na^2 \omega_m|\le Cd^{-2}, \label{eq: property 3 of omega_a local}.
\end{align}
\end{subequations}
As in the proof of Theorem~\ref{thm: global best approx} we obtain by~\eqref{eq: starting with B} that
\begin{equation}\label{eq: local starting expression}
\int_0^T|v_{kh}(t,\gamma_k(t))|^2dt =B(v_{kh}, g_{kh}) =B(v, g_{kh})=B(\om v, g_{kh})+B((1-\om)v, g_{kh}),
\end{equation}
where $g_{kh}$ is the solution of~\eqref{eq: discrete heat with dirac RHS}. Note that $\omega v$ is discontinuous in time. The first term can be estimated using the global result from Theorem~\ref{thm: global best approx}. To this end we introduce the solution $\tilde v_{kh} \in \Xkh$ defined by
\[
B(\tilde v_{kh} - \om v, \varphi_{kh}) = 0 \quad \text{for all }  \varphi_{kh} \in \Xkh.
\]
There holds
\[
\begin{aligned}
B(\om v, g_{kh}) &= B(\tilde v_{kh}, g_{kh}) = \int_0^T v_{kh}(t,\gamma_k(t)) \tilde v_{kh}(t,\gamma_k(t))\, dt\\
&\le \half \int_0^T \abs{v_{kh}(t,\gamma_k(t))}^2\, dt + \half \int_0^T \abs{\tilde v_{kh}(t,\gamma_k(t))}^2\, dt.
\end{aligned}
\]
 Applying Theorem~\ref{thm: global best approx} for the second term, we have
\[
\begin{aligned}
\int_0^T |\tilde v_{kh}&(t,\gamma_k(t))|^2 dt \le  C \lh^2
\left(\|\om v\|^2_{L^2(I;L^\infty(\Om))}+h^{-\frac{4}{p}}\|\pi_k (\om v)\|^2_{L^2(I; L^p(\Om))}\right)\\
&\le  C \lh^2
\sum_{m=1}^M\left(\|v\|^2_{L^2(I_m;L^\infty(B_{d,m}))}+h^{-\frac{4}{p}}\|\pi_k v\|^2_{L^2(I_m; L^p(B_{d,m}))}\right).
\end{aligned}
\]
From \eqref{eq: local starting expression}, canceling $\half \int_0^T \abs{v_{kh}(t,\gamma_k(t))}^2\, dt$ and using the above estimate, we obtain
\begin{equation}\label{eq:local_after_first_step}
\begin{aligned}
\int_0^T&|v_{kh}(t,\gamma_k(t))|^2dt \le B((1-\om)v, g_{kh})\\
&+C \lh^2  \sum_{m=1}^M
\left(\|v\|^2_{L^2(I_m;L^\infty(B_{d,m}))}+h^{-\frac{4}{p}}\|\pi_k v\|^2_{L^2(I_m; L^p(B_{d,m}))}\right).
\end{aligned}
\end{equation}
It remains to estimate the term $B((1-\om)v, g_{kh})$. Using the dual expression~\eqref{eq:B_dual} of the bilinear form $B$ we obtain
\begin{equation}\label{eq:J1_J2_local}
\begin{aligned}
B((1-\om)v, g_{kh}) &= \sum_{m=1}^M\bigg((\na((1-\om_m)v), \na g_{kh})_{\ImOm} - ((1-\om_m)v_m, [g_{kh}]_m)_{\Om} \bigg)\\
&= J_1 + J_2.
\end{aligned}
\end{equation}
To estimate $J_1$ we define $\psi = (1-\om)v$ and proceed using
the Ritz projection $R_h \colon H^1_0(\Omega) \to V_h$ defined by
\begin{equation}\label{eq:Ritz_proj}
(\nabla R_hv,\nabla \chi)_{\Om} = (\nabla v,\nabla \chi)_{\Om}, \quad \forall \chi\in V_h.
\end{equation}
There holds
\[
\begin{aligned}
(\na\psi, \na g_{kh})_{\ImOm}&=(\na R_h\psi, \na g_{kh})_{\ImOm}=-(R_h\psi, \De_h g_{kh})_{\ImOm}\\
&=-(R_h\psi, \De_h g_{kh})_{I_m\times B_{d/4,m}}-(R_h\psi, \De_h g_{kh})_{I_m\times\Om\setminus B_{d/4,m}}\\
&\le \|R_h\psi\|_{L^2(I_m;L^\infty(B_{d/4,m}))} \|\De_h g_{kh}\|_{L^2(I_m;L^1(B_{d/4,m}))}\\
&\ +\|\si^{-1}_{k,m}R_h\psi\|_{L^2(I_m\times\Om\setminus B_{d/4,m})}\|\si_{k,m}\De_h g_{kh}\|_{L^2(I_m\times\Om)}.
\end{aligned}
\]
Using the estimate
\[
\begin{aligned}
\|\De_h g_{kh}\|_{L^2(I_m;L^1(B_{d/4,m}))} &\le \ltwonorm{\sigma^{-1}_{k,m}} \|\si_{k,m}\De_h g_{kh}\|_{L^2(I_m\times B_{d/4,m})} \\
&\le C \lh^\half \|\sigma_{k,m}\De_h g_{kh}\|_{L^2(I_m\times\Om)},
\end{aligned}
\]
where in the last step we used \eqref{eq: property 1 of sigma_k}, 
we obtain
\begin{equation}\label{eq:est_psi_g_kh}
\begin{aligned}
(\na\psi, \na g_{kh})_{\ImOm} &\le C \lh^\half \left( \|R_h\psi\|_{L^2(I_m;L^\infty(B_{d/4,m}))} + \|\si^{-1}_{k,m}R_h\psi\|_{L^2(I_m\times\Om\setminus B_{d/4,m})}\right)\\
&\quad\times\|\si_{k,m}\De_h g_{kh}\|_{L^2(I_m\times\Om)}.
\end{aligned}
\end{equation}
By the interior pointwise error estimates from Theorem 5.1 in \cite{SchatzAH_WahlbinLB_1977}, we have for each $t\in I_m$,
$$
\begin{aligned}
\|R_h\psi(t)\|_{L^\infty(B_{d/4,m})} &\le c\lh\|\psi(t)\|_{L^\infty(B_{d/2,m})}+Cd^{-1}\|R_h\psi(t)\|_{L^2(B_{d/2,m})}\\
&=Cd^{-1}\|R_h\psi(t)\|_{L^2(B_{d/2,m})},
\end{aligned}
$$
since the support of $\psi_m=(1-\om_m)v$ is contained  in $\Om\setminus B_{d/2,m}$. On $\Om\setminus B_{d/4,m}$ there holds $\sigma_{k,m} \ge d/4$ and therefore for each $t\in I_m$,
\[
\|\si^{-1}_{k,m}R_h\psi(t)\|_{L^2(\Om\setminus B_{d/4,m})} \le Cd^{-1} \|R_h\psi(t)\|_{L^2(\Om\setminus B_{d/4,m})}.
\]
Inserting the last two estimates into~\eqref{eq:est_psi_g_kh} we get
\[
(\na\psi, \na g_{kh})_{I_m\times\Om} \le C d^{-1} \lh^\half \|R_h \psi\|_{L^2(I_m\times\Om)} \|\sigma_{k,m}\De_h g_{kh}\|_{L^2(I_m\times\Om)}.
\]
Using a standard elliptic estimate and recalling $\psi = (1-\om) v$ we have
\[
\begin{aligned}
\ltwonorm{R_h \psi(t)} &\le \ltwonorm{\psi(t)} + \ltwonorm{\psi(t)-R_h\psi(t)}\\
&\le  \ltwonorm{\psi(t)} + c h \ltwonorm{\nabla \psi(t)}\\
&\le \ltwonorm{v(t)} + c h \ltwonorm{(1-\om(t))\nabla v(t) - \nabla \om(t) v(t)}\\
&\le c \ltwonorm{v(t)} + ch \ltwonorm{\nabla v(t)},
\end{aligned}
\]
where in the last step we used $\abs{\nabla \om(t)} \le c d^{-1} \le c h^{-1}$. This results in
\[
(\na\psi, \na g_{kh})_{I_m\times\Om} \le C d^{-1} \lh^\half \left(\|v\|_{L^2(I_m\times\Om)} + h \|\nabla v\|_{L^2(I_m\times\Om)} \right) \|\sigma_{k,m}\De_h g_{kh}\|_{L^2(I_m\times\Om)}.
\]
Therefore, we get
\begin{equation}\label{eq:J1_local}
J_1 \le c d^{-1} \lh^\half \left(\norm{v}_{L^2(I;L^2(\Om))} + h \norm{\nabla v}_{L^2(I;L^2(\Om))} \right) \norm{\sigma_k\De_h g_{kh}}_{L^2(I;L^2(\Om))}.
\end{equation}
For $J_2$ we obtain
\begin{equation}\label{eq:J2_local}
\begin{aligned}
J_2&\le \sum_{m=1}^M\ltwonorm{\si_m^{-1}(1-\om_m)v_m} k_m^\half k_m^{-\half}\ltwonorm{\si_{k,m} [g_{kh}]_m}\\
&\le C\left(\sum_{m=1}^M d^{-2} k_m \ltwonorm{(1-\om_m)v_m}^2\right)^{\half}\left(\sum_{m=1}^Mk_m^{-1}\ltwonorm{\si_{k,m} [g_{kh}]_m}^2\right)^{\half}\\
&\le C d^{-1} \norm{\pi_k v}_{L^2(I;L^2(\Om))}\left(\sum_{m=1}^Mk_m^{-1}\ltwonorm{\si_{k,m} [g_{kh}]_m}^2\right)^{\half},
\end{aligned}
\end{equation}
where we used that $supp (1-\om_m)v_m \subset \Omega \setminus B_{d/2,m}$ and $\sigma_{k,m} \ge d/2$ on this set as well as the definition of $\pi_k$~\eqref{eq:pi_k}.
Inserting the estimate~\eqref{eq:J1_local} for $J_1$ and the estimate~\eqref{eq:J2_local} for $J_2$ into~\eqref{eq:J1_J2_local} we obtain
\begin{align*}
B((1-\om)v, g_{kh})&\le C d^{-1} \lh^\half
\left(\sum_{m=1}^M\norm{\sigma_{k,m}\De_h g_{kh}}_{L^2(I_m\times\Om)}^2 + k_m^{-1}\ltwonorm{\si_{k,m} [g_{kh}]_m}^2\right)^\half\\
&\quad \times \left(\norm{v}_{L^2(I;L^2(\Om))} + h \norm{\nabla v}_{L^2(I;L^2(\Om))} + \norm{\pi_k v}_{L^2(I;L^2(\Om))} \right).
\end{align*}
Using the estimate \eqref{eq: needed estimate for greens} and Lemma \ref{lemma 2}
\begin{align*}
B((1-\om)v, g_{kh})&\le C d^{-1}  \lh  \left( \int_0^T |v_{kh}(t,\gamma_k(t))|^2\,dt\right)^\half\\
&\quad \times \left(\norm{v}_{L^2(I;L^2(\Om))} + h \norm{\nabla v}_{L^2(I;L^2(\Om))} + \norm{\pi_k v}_{L^2(I;L^2(\Om))} \right).
\end{align*}
Inserting this inequality into~\eqref{eq:local_after_first_step} we obtain
\begin{multline*}
\int_0^T|v_{kh}(t,\gamma_k(t))|^2dt \le C  \lh^2
\left(\sum_{m=1}^M\|v\|^2_{L^2(I_m;L^\infty(B_{d,m}))}+h^{-\frac{4}{p}}\|\pi_k v\|^2_{L^2(I_m; L^p(B_{d,m}))}\right) \\
+C d^{-2}  \lh^2 \left(\norm{v}_{L^2(I;L^2(\Om))}^2 + h^2 \norm{\nabla v}^2_{L^2(I;L^2(\Om))} + \norm{\pi_k v}^2_{L^2(I;L^2(\Om))} \right).
\end{multline*}
Using that the dG($0$)cG($1$) method is invariant on $\Xkh$, by replacing $v$ and $v_{kh}$ with $v-\chi$ and $v_{kh}-\chi$ for any $\chi\in \Xkh$, we obtain the estimate in  Theorem~\ref{thm: apriori local}.
\end{proof}

\section{Discretization of the optimal control problem}\label{sec: optimal control}
In this section we  describe the discretization of the optimal control problem \eqref{eq:intro-obj}-\eqref{eq:intro-state} and prove our main result, Theorem~\ref{thm: main}. We start with discretization of the state equation.  For a given control $q \in Q$ we define the corresponding discrete state $u_{kh} = u_{kh}(q) \in \Xkh$ by
\begin{equation}\label{eq:state_kh}
B(u_{kh},\varphi_{kh}) = \int_0^T q(t)\varphi_{kh}(t,\gamma_k(t))\, dt \quad \text{for all }\; \varphi_{kh} \in \Xkh.
\end{equation}
Using the weak formulation for $u=u(q)$ from Proposition~\ref{prop:reg_u2} we obtain the perturbed Galerkin orthogonality,
\begin{equation}\label{eq: error u and u_kh}
B(u-u_{kh},\varphi_{kh}) =\int_0^T q(t)\left(\varphi_{kh}(t,\gamma(t))-\varphi_{kh}(t,\gamma_k(t))\right)\, dt  \quad \text{for all }\; \varphi_{kh} \in \Xkh.
\end{equation}
Note, that the jump terms involving $u$ vanish due to the fact that 
$$
u \in H^1(I;W^{-1,s}(\Omega))\hookrightarrow C(I;W^{-1,s}(\Omega))
$$
 and $\varphi_{kh,m} \in W^{1,\infty}(\Omega)$.

Similarly to the continuous problem, we define the discrete reduced cost functional $j_{kh} \colon Q \to \mathbb{R}$ by
\[
j_{kh}(q) = J(q,u_{kh}(q)),
\]
where $J$ is the cost function in~\eqref{eq:intro-obj}. The discretized optimal control problem is then given as
\begin{equation}\label{eq:min_pr_kh}
\min\,  j_{kh}(q), \quad q \in \Qad,
\end{equation}
where $\Qad$ is the set of admissible controls~\eqref{eq:qad}. We note, that the control variable $q$ is not explicitly discretized, cf.~\cite{HinzeM_2005a}. With standard arguments one proves the existence of a unique solution $\oq_{kh} \in \Qad$ of~\eqref{eq:min_pr_kh}. Due to convexity of the problem, the following condition is necessary and sufficient for the optimality,
\begin{equation}\label{eq:opt_cond_kh}
j'_{kh}(\oq_{kh})(\partial q - \oq_{kh}) \ge 0 \quad \text{for all }\; \partial q \in \Qad.
\end{equation}
As on the continuous level, the directional derivative $j'_{kh}(q)(\partial q)$ for given $q,\partial q \in Q$ can be expressed as
\[
j'_{kh}(q)(\partial q) = \int_I \left(\alpha q(t) + z_{kh}(t,\gamma_k(t)) \right) \partial q(t)\, dt,
\]
where $z_{kh} = z_{kh}(q)$ is the solution of the discrete adjoint equation
\begin{equation}\label{eq:adjoint_kh}
B(\varphi_{kh},z_{kh}) = (u_{kh}(q) - \hat{u},\varphi_{kh})_{I\times \Om} \quad \text{for all }\; \varphi_{kh} \in \Xkh.
\end{equation}
The discrete adjoint state, which corresponds to the discrete optimal control $\oq_{kh}$ is denoted by $\oz_{kh} = z(\oq_{kh})$. The variational inequality~\eqref{eq:opt_cond_kh} is  equivalent to the following pointwise projection formula, cf.~\eqref{eq:proj},
\[
\oq_{kh}(t) = P_{\Qad}\left(-\frac{1}{\alpha} \oz_{kh}(t,\gamma_k(t))\right),
\]
or
\[
\oq_{kh,m} = P_{\Qad}\left(-\frac{1}{\alpha} \oz_{kh,m}(\gamma_{k,m})\right),
\]
on each $I_m$.
Due to the fact that $\oz_{kh} \in \Xkh$, we have $\oz_{kh}(t,\gamma_k(t))$ is piecewise constant and therefore by the projection formula also $\oq_{kh}$ is piecewise constant. As a result no explicit discretization of the control variable is required. 

To prove Theorem~\ref{thm: main} we first need estimates for the error in the state and in the adjoint variables for a given (fixed) control $q$. Due to the structure of the optimality conditions, we will have to estimate the error $\norm{z(\cdot,\gamma(\cdot))-z_{kh}(\cdot,\gamma_k(\cdot))}_I$, where $z=z(q)$ and $z_{kh}=z_{kh}(q)$.
Note, that $z_{kh}$ is not the Galerkin projection of $z$ due to the fact that the right-hand side of the adjoint equation~\eqref{eq:intro-adjoint} involves $u=u(q)$ and the right-hand side of the discrete adjoint equation~\eqref{eq:adjoint_kh} involves $u_{kh}=u_{kh}(q)$. To obtain an estimate of optimal order,
 we will first estimate the error $u - u_{kh}$ with respect to the $L^2(I;L^1(\Omega))$ norm. Note, that an $L^2$ estimate would not lead to an optimal result.

\begin{theorem}\label{th:est_u_l2l1}
Let $q\in Q$ be given and let $u=u(q)$  be the solution of the state equation~\eqref{eq:intro-state} and $u_{kh}=u_{kh}(q)\in \Xkh$ be the solution of the discrete state equation~\eqref{eq:state_kh}. Then there holds the following estimate
\[
\norm{u - u_{kh}}_{L^2(I;L^1(\Omega))} \le \left( C \lh^2 (k + h^2)+C_\gamma|\ln{h}|k\right) \norm{q}_I.
\]
\end{theorem}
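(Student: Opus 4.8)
The plan is to use a duality argument to convert the $L^2(I;L^1(\Omega))$ norm of $u-u_{kh}$ into a weighted $L^2$ estimate controlled by the best-approximation results of Section~\ref{sec: best approximation}. First I would write, by duality,
\[
\norm{u-u_{kh}}_{L^2(I;L^1(\Omega))} = \sup_{\norm{\psi}_{L^2(I;L^\infty(\Omega))}=1} (u-u_{kh},\psi)_{\IOm},
\]
and introduce the solution $\varphi$ of the backward heat equation with right-hand side $\psi$, together with its dG(0)cG(1) approximation $\varphi_{kh}\in\Xkh$. Using the perturbed Galerkin orthogonality~\eqref{eq: error u and u_kh} and the weak formulation from Proposition~\ref{prop:reg_u2}, the quantity $(u-u_{kh},\psi)_{\IOm}$ splits into a Galerkin-type term $B(u-u_{kh},\varphi-\varphi_{kh})$, which by~\eqref{eq: error u and u_kh} reduces to a term of the form $(u-u_{kh}) \text{ tested against } \varphi_{kh}$ evaluated along $\gamma$ versus $\gamma_k$, plus the consistency defect $\int_0^T q(t)(\varphi_{kh}(t,\gamma(t))-\varphi_{kh}(t,\gamma_k(t)))\,dt$.

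The consistency defect is handled by a Lipschitz-type argument: since $\abs{\gamma(t)-\gamma_k(t)}\le C_\gamma k$ on each $I_m$ and $\varphi_{kh}$ is piecewise linear in space, $\abs{\varphi_{kh}(t,\gamma(t))-\varphi_{kh}(t,\gamma_k(t))}\le C_\gamma k\,\norm{\nabla\varphi_{kh}(t)}_{L^\infty}$; combining with Cauchy--Schwarz in time, a discrete Sobolev/inverse bound relating $\norm{\nabla\varphi_{kh}}_{L^\infty}$ to weighted $L^2$ quantities, and stability of $\varphi_{kh}$ against $\psi$, yields the $C_\gamma\lh k\,\norm{q}_I$ contribution. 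For the main term, I would express $B(u-u_{kh},\varphi-\varphi_{kh})$ through the structure already exploited in~\eqref{eq: starting with B}--\eqref{eq: before jumps}: an elementwise integration by parts produces $\norm{u}_{L^2(I;L^\infty)}$-type factors (controlled by Proposition~\ref{prop:reg_u}) multiplied by jump-of-normal-derivative sums, which are bounded via Lemma~2.4 of~\cite{RannacherR_1991a} by $\lh^{1/2}(\norm{\sigma_k\Delta_h(\cdot)}+\norm{\nabla(\cdot)})$, together with the time-jump terms. Applying Theorem~\ref{thm: global best approx} (or the interior Theorem~\ref{thm: apriori local}) to $\varphi-\varphi_{kh}$ along the curve and standard approximation estimates $\norm{\varphi-\pi_{kh}\varphi}$ in $L^2(I;L^\infty)$, $L^2(I;L^p)$ (cost $k+h^2$ with a factor $h^{-2/p}\to$ constant for $p$ chosen $\sim\lh$), together with the parabolic regularity~\eqref{eq: H2 regularity for v}, \eqref{eq: Holder regularity for v} and Lemma~\ref{lemma: regularity} for $\varphi$ in terms of $\psi$, produces the $C\lh^2(k+h^2)\norm{q}_I$ contribution. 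The extra logarithm from choosing $p\sim\lh$ and the one from the best-approximation theorems must be tracked to confirm the stated power $\lh^2$; this bookkeeping of logarithmic factors, and making the weighted-norm estimates for the \emph{dual} solution $\varphi_{kh}$ match exactly the hypotheses of Theorem~\ref{thm: global best approx}, is the step I expect to be the main obstacle.

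A cleaner alternative I would consider in parallel: estimate $(u-u_{kh},\psi)_{\IOm}$ directly by noting $u-u_{kh} = (u-\pi_{kh}u) + (\pi_{kh}u - u_{kh})$, bound the first piece by classical interpolation and the regularity of Proposition~\ref{prop:reg_u2}, and for the discrete part $e_{kh}:=\pi_{kh}u-u_{kh}\in\Xkh$ use $B(e_{kh},\varphi_{kh}) = B(\pi_{kh}u - u,\varphi_{kh}) + \int q(\varphi_{kh}(\cdot,\gamma)-\varphi_{kh}(\cdot,\gamma_k))$ and test with the dual solution, so that the point evaluation of $\varphi_{kh}$ along $\gamma_k$ is what appears and Theorem~\ref{thm: global best approx} applies verbatim. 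Either route culminates in absorbing the $\norm{q}_I$-independent factors into the supremum over $\psi$, giving
\[
\norm{u-u_{kh}}_{L^2(I;L^1(\Omega))} \le \bigl(C\lh^2(k+h^2) + C_\gamma\lh k\bigr)\norm{q}_I,
\]
as claimed.
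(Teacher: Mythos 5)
Your overall skeleton is the paper's: dualize the $L^2(I;L^1(\Omega))$ norm, solve the backward heat equation for the dual data, split off the Galerkin part via the perturbed orthogonality \eqref{eq: error u and u_kh}, reduce everything to point values of the dual error along the curve, and invoke the best-approximation theorems with $p\sim\lh$. However, several of the concrete steps you propose would fail. First, your treatment of the consistency defect hinges on bounding $\norm{\nabla\varphi_{kh}(t)}_{L^\infty(\Omega)}$ by the dual data; no such discrete max-norm gradient stability is available in the paper, and any inverse estimate costs a factor $h^{-1}$ that destroys the rate. The paper avoids this entirely: it first combines $B(u,w-w_{kh})=\int_0^T q(t)(w-w_{kh})(t,\gamma(t))\,dt$ with the defect term to get $\int_0^T q(t)\bigl(w(t,\gamma(t))-w_{kh}(t,\gamma_k(t))\bigr)dt$, and then shifts the curve on the \emph{continuous} dual solution, using $w\in L^2(I;W^{2,p}(\Omega_0))\hookrightarrow L^2(I;C^1(\Omega_0))$ from Lemma~\ref{lemma: regularity local_Om_0} to get the $C_\gamma k$ contribution with only a factor $p\sim\lh$. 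Second, your bound of the main term by ``$\norm{u}_{L^2(I;L^\infty)}$-type factors controlled by Proposition~\ref{prop:reg_u}'' cannot work: that proposition gives only $\norm{u}_{L^2(I;L^p(\Omega))}\le Cp\norm{q}_{L^2(I)}$ for finite $p$, and $u$ is not in $L^2(I;L^\infty(\Omega))$ for a Dirac source in 2D. The correct move is not to integrate $B(u-u_{kh},\varphi-\varphi_{kh})$ by parts at all, but to use the weak formulation of the state equation (Proposition~\ref{prop:reg_u2}) so that $B(u,\varphi-\varphi_{kh})$ \emph{is} the point evaluation of the dual error along $\gamma$, and then apply the best-approximation theorem to the dual problem.

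Two further points. The global and interior theorems are not interchangeable here: since $\Omega$ is only a convex polygon, the dual solution has $W^{2,p}$ regularity for large $p$ only locally near the curve, so the global Theorem~\ref{thm: global best approx} (which requires control of $\norm{w-\chi}_{L^2(I;L^\infty(\Omega))}$ over all of $\Omega$) would give a suboptimal rate; the interior Theorem~\ref{thm: apriori local} with $B_{d,m}\subset\Omega_1\subset\subset\Omega$ is essential, with the far-field handled by the $d^{-2}$-weighted global $L^2$ terms and the $H^2$ regularity \eqref{eq: H2 regularity for v}. Finally, your ``cleaner alternative'' via $u-\pi_{kh}u$ cannot reach the optimal order: by Proposition~\ref{prop:reg_u2} the state has only $W^{1,s}$ regularity for $s<2$, so the interpolation error of $u$ itself is at best $O(h)$ in any norm in which you could measure it; the whole point of the duality argument is to transfer all approximation requirements onto the smooth dual solution. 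You also leave the crucial bookkeeping unresolved, namely that the dual data must be chosen (as in the paper, $b=\sgn(e)\norm{e(t,\cdot)}_{L^1(\Omega)}$) so that $\norm{b}_{L^2(I;L^p(\Omega))}\le C\norm{e}_{L^2(I;L^1(\Omega))}$ uniformly in $p$, which is what lets the factor $\norm{e}_{L^2(I;L^1(\Omega))}$ be divided out at the end.
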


\begin{proof}
We denote by $e=u-u_{kh}$ the error and consider the following auxiliary dual problem
\[
\begin{aligned}
-w_t(t,x)-\Delta w(t,x) &=  b(t,x), & (t,x) &\in \IOm,\;  \\
    w(t,x) &= 0,    & (t,x) &\in I\times\pa\Omega, \\
   w(T,x) &= 0,    & x &\in \Omega,
\end{aligned}
\]
where 
$$
b(t,x) = \sgn(e(t,x))\norm{e(t,\cdot)}_{L^1(\Om)}\in L^2(I;L^\infty(\Om))
$$
and the corresponding discrete solution $w_{kh} \in \Xkh$ defined by
\[
B(\varphi_{kh},w-w_{kh})=0, \quad \forall \varphi_{kh}\in \Xkh.
\]
Using \eqref{eq: error u and u_kh} for $e=u-u_{kh}$ and the Galerkin orthogonality for  $w-w_{kh}$ we obtain,
\begin{equation}\label{eq:err_u_l2l1}
\begin{aligned}
\int_0^T &\norm{e(t,\cdot)}_{L^1(\Omega)}^2\,dt = (e,\sgn(e)\norm{e(t,\cdot)}_{L^1(\Om)})_{I \times \Omega} \\
&= (e,b)_{I \times \Omega}\\
&=B(e, w)\\
&=B(e, w-w_{kh})+B(e, w_{kh})\\
&=B(u, w-w_{kh})+B(e, w_{kh})\\
&=\int_0^T q(t)(w-w_{kh})(t,\gamma(t))dt+\int_0^T q(t)(w_{kh}(t,\gamma(t))-w_{kh}(t,\gamma_k(t)))dt\\
&=\int_0^T q(t)(w(t,\gamma(t))-w_{kh}(t,\gamma_k(t)))dt\\
&\le \norm{q}_I\, \left(\int_0^T|w(t,\gamma(t))-w_{kh}(t,\gamma_k(t))|^2dt\right)^{\frac{1}{2}}\\
&\le\norm{q}_I\, \left(\int_0^T\left(|w(t,\gamma(t))-w(t,\gamma_k(t))|^2+|(w-w_{kh})(t,\gamma_k(t))|^2\right)dt\right)^{\frac{1}{2}}.
\end{aligned}
\end{equation}
Using the local estimate from Theorem \ref{thm: apriori local} with $B_{d,m}\subset \Om_1$ for any $m=1,\dots,M,$  where $\Om_0\subset\subset\Om_1\subset\subset\Om$, we obtain
\[
\begin{aligned}
\int_0^T|(w&-w_{kh})(t,\gamma_k(t))|^2dt\\\le & C\lh^2\int_0^T\left(\|w-\chi\|^2_{L^\infty(\Om_1)}+h^{-\frac{4}{p}}\|\pi_k w -\chi\|^2_{L^p(\Om_1)}\right)dt\\ &+C\lh^2\int_{0}^T\left(\|w-\chi\|^2_{L^2(\Omega)}+h^2\|\na(w-\chi)\|^2_{L^2(\Omega)} +\|\pi_k w - \chi\|^2\right)dt\\
=& J_1+J_2+J_3+J_4+J_5.
\end{aligned}
\]
We take $\chi=i_h\pi_k w$, where $i_h$ is the modified Cl\'{e}ment interpolant and $\pi_k$ is the projection defined in \eqref{eq:pi_k}. Thus,  by the triangle inequality, approximation theory, inverse inequality and the stability of the Cl\'{e}ment interpolant in $L^p$ norm, we have
\begin{align*}
J_1&\le C\lh^2\int_{0}^T\left(\|w-i_h w\|^2_{L^\infty(\Om_1)}+\|i_h(w-\pi_k w)\|^2_{L^\infty(\Om_1)}\right)dt \\
& \le C\lh^2\int_{0}^T \left(h^{4-\frac{4}{p}}\|w\|^2_{W^{2,p}(\Om_1)}+h^{-\frac{4}{p}}\|i_h(w-\pi_k w)\|^2_{L^p(\Om_1)}\right)dt\\
&\le Ch^{-\frac{4}{p}}\lh^2(h^4+k^2)\int_0^T\left(\| w\|^2_{W^{2,p}(\Om_1)}+\|w_t\|^2_{L^p(\Om_1)}\right)dt.
\end{align*}
$J_2$ can be estimated similarly since for $\chi=i_h\pi_k w$ by the triangle inequality we have
\[
\|\pi_{k}w-i_h\pi_k w\|_{L^p(\Om)}\le \|\pi_{k}w-w\|_{L^p(\Om)}+\|w-i_hw\|_{L^p(\Om)}+\|i_{h}(w-\pi_k w)\|_{L^p(\Om)}.
\]
As a result
\[
J_1+J_2
\le Ch^{-\frac{4}{p}}\lh^2(h^4+k^2)\int_0^T\left(\| w\|^2_{W^{2,p}(\Om_1)}+\|w_t\|^2_{L^p(\Om_1)}\right)dt.
\]
Using Lemma~\ref{lemma: regularity local_Om_0}, we obtain
\begin{equation}\label{eq: follows from Lemma 2}
\int_{0}^T\left(\| w\|^2_{W^{2,p}(\Om_1)}+\|w_t\|^2_{L^p(\Om_1)}\right)dt \le C p^2\norm{b}_{L^2(I;L^p(\Omega))}^2 \le Cp^2 \norm{e}_{L^2(I;L^1(\Omega))}^2,
\end{equation}
and hence
\begin{equation}\label{eq: J1 and J2}
J_1+J_2
\le Ch^{-\frac{4}{p}}\lh^2(h^4+k^2)p^2 \norm{e}_{L^2(I;L^1(\Omega))}^2.
\end{equation}
For the terms $J_3$ and $J_4$ we obtain using an $L^2$-estimate from~\cite{MeidnerD_VexlerB_2008a}
\[
\begin{aligned}
J_3+J_4 &\le C\lh^2 (h^4+k^2)\left(\norm{\nabla^2 w}^2_{L^2(I;L^2(\Omega))} + \norm{w_t}^2_{L^2(I;L^2(\Omega))}\right)\\
& \le C   \lh^2(h^4+k^2) \norm{b}^2_{L^2(I;L^2(\Omega))}\\
&\le C   \lh^2(h^4+k^2) \norm{e}^2_{L^2(I;L^1(\Omega))}.
\end{aligned}
\]
$J_5$ can be estimated similarly since by the triangle inequality
$$
\|\pi_kw-i_h\pi_kw \|_{L^2(I\times\Om)}\le \|\pi_kw-w \|_{L^2(I\times\Om)}+\|w-i_h\pi_kw \|_{L^2(I\times\Om)}.
$$
On the other hand using that $w\in L^2(I;W^{2,p}(\Om_0))$ for $p>2$ and that  $W^{2,p}(\Om_0))\hookrightarrow C^1(\Om_0)$ for $p>2$, and using Assumption 1, we have
$$
\begin{aligned}
\int_0^T|w(t,\gamma(t))-w(t,\gamma_k(t))|^2dt&\le 
\int_0^T\|w(t,\cdot)\|^2_{C^1(\Om_0)}|\gamma(t)-\gamma_k(t)|^{2}dt\\
&\le C\|\gamma-\gamma_k\|^{2}_{C^0(I)}\int_0^T\|w(t,\cdot)\|^2_{W^{2,p}(\Om_0)}dt\\
&\le CC_\gamma^2 k^{2}\|w\|^2_{L^2(I;W^{2,p}(\Om_0))}\\
&\le CC_\gamma^2k^{2} p^2\norm{b}^2_{L^2(I;L^p(\Omega))}\\
&\le CC_\gamma^2k^{2}p^2 \norm{e}^2_{L^2(I;L^1(\Omega))},
\end{aligned}
$$
where in the last two steps we used \eqref{eq: follows from Lemma 2}.
Combining the estimate for $J_1$, $J_2$, $J_3$, $J_4$, $J_5$ and the above estimate and inserting them into~\eqref{eq:err_u_l2l1} we obtain:
\[
\norm{e}_{L^2(I;L^1(\Omega))} \le \left(C\lh(p h^{-\frac{2}{p}} +1)(h^2+k)+C_\gamma pk\right)\|q\|_{L^2(I)}.
\]
Setting $p = \lh$ completes the proof.
\end{proof}

In the following theorem we provide an estimate of the error in the adjoint state for fixed control $q$.
\begin{theorem}\label{th:est_z}
Let $q\in Q$ be given and let $z=z(q)$  be the solution of the adjoint equation~\eqref{eq:intro-adjoint} and $z_{kh}=z_{kh}(q)\in \Xkh$ be the solution of the discrete adjoint equation~\eqref{eq:adjoint_kh}. Then there holds the following estimate
\begin{align*}
\bigg(\int_0^T \abs{z(t,\gamma(t)) &- z_{kh}(t,\gamma_k(t))}^2\,dt\bigg)^{\frac{1}{2}} \\
&\le C \left(\lh^3 (k + h^2)+C_\gamma\lh k\right) 
\left(\norm{q}_{L^2(I)}+\norm{\hat u}_{L^2(I;L^\infty(\Omega))}\right).
\end{align*}
\end{theorem}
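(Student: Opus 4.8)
The plan is to estimate the error $z(\cdot,\gamma(\cdot)) - z_{kh}(\cdot,\gamma_k(\cdot))$ by introducing the intermediate \emph{auxiliary adjoint} $\tilde z_{kh} \in \Xkh$, defined as the dG(0)cG(1) solution of the adjoint equation with the \emph{exact} right-hand side $u - \hat u$, i.e. $B(\varphi_{kh},\tilde z_{kh}) = (u-\hat u,\varphi_{kh})_{I\times\Om}$ for all $\varphi_{kh}\in\Xkh$. Then a triangle inequality splits the error into three pieces:
\begin{equation*}
z(t,\gamma(t)) - z_{kh}(t,\gamma_k(t)) = \underbrace{\bigl(z(t,\gamma(t)) - z(t,\gamma_k(t))\bigr)}_{\text{(I) curve perturbation}} + \underbrace{\bigl(z - \tilde z_{kh}\bigr)(t,\gamma_k(t))}_{\text{(II) discretization of adjoint}} + \underbrace{\bigl(\tilde z_{kh} - z_{kh}\bigr)(t,\gamma_k(t))}_{\text{(III) data perturbation } u\mapsto u_{kh}}.
\end{equation*}
Term (I) is handled exactly as in the proof of Theorem~\ref{th:est_u_l2l1}: since $z \in L^2(I;W^{2,p}(\Om_0))\hookrightarrow L^2(I;C^1(\Om_0))$ by Proposition~\ref{prop:reg_z}(b), one bounds it by $C C_\gamma k \norm{z}_{L^2(I;W^{2,p}(\Om_0))} \le CC_\gamma k\, p^2(\norm{q}_{L^2(I)} + \norm{\hat u}_{L^2(I;L^\infty(\Om))})$, and choosing $p = \lh$ gives the $C_\gamma \lh k\,(\cdots)$ contribution (the two powers of $\lh$ come from Proposition~\ref{prop:reg_z}(b), the third is not needed here).

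Term (II) is pure discretization error for the heat equation with right-hand side $u-\hat u$, so I would apply the interior estimate Theorem~\ref{thm: apriori local} with balls $B_{d,m}\subset\Om_1$ and a fixed $d$ (so $d^{-2}$ is an absolute constant), choosing $\chi = i_h\pi_k z$. Exactly as in the estimate of $J_1,\dots,J_5$ in Theorem~\ref{th:est_u_l2l1}, the local $L^\infty$ and $L^p$ terms are bounded via the Clément interpolant by $C h^{-4/p}\lh^2(h^4+k^2)\bigl(\norm{z}_{L^2(I;W^{2,p}(\Om_1))}^2 + \norm{z_t}_{L^2(I;L^p(\Om_1))}^2\bigr)$, and the global $L^2$/$H^1$ terms by $C\lh^2(h^4+k^2)\bigl(\norm{\nabla^2 z}_{L^2(I;L^2(\Om))}^2 + \norm{z_t}_{L^2(I;L^2(\Om))}^2\bigr)$. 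Invoking Proposition~\ref{prop:reg_z}(a) and (b), the interior norms are controlled by $p^2(\norm{q}_{L^2(I)} + \norm{\hat u}_{L^2(I;L^\infty(\Om))})$, and then setting $p=\lh$ turns $h^{-4/p}$ into an absolute constant and produces $C\lh^3(k+h^2)(\norm{q}_{L^2(I)} + \norm{\hat u}_{L^2(I;L^\infty(\Om))})$; note here one picks up $\lh^2$ from the theorem and $\lh$ from the regularity constant $p^2 = \lh^2$... actually $\lh\cdot\lh^2 = \lh^3$ with an extra $\lh^2$ absorbed — I would track the logarithms carefully so the final count is $\lh^3$.

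Term (III) is the crucial new ingredient and I expect it to be the main obstacle. Both $\tilde z_{kh}$ and $z_{kh}$ lie in $\Xkh$ and solve the \emph{same} discrete adjoint bilinear form but with right-hand sides $u-\hat u$ and $u_{kh}-\hat u$ respectively; hence $\tilde z_{kh} - z_{kh}$ solves $B(\varphi_{kh}, \tilde z_{kh}-z_{kh}) = (u - u_{kh},\varphi_{kh})_{I\times\Om}$ for all $\varphi_{kh}\in\Xkh$. The goal is to bound $\bigl(\int_0^T |(\tilde z_{kh}-z_{kh})(t,\gamma_k(t))|^2\,dt\bigr)^{1/2}$ in terms of $\norm{u-u_{kh}}_{L^2(I;L^1(\Om))}$, for which Theorem~\ref{th:est_u_l2l1} already gives the sharp bound. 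The natural route is a duality/Green's function argument on the discrete level: test with a smoothed-Dirac backward problem as in Theorem~\ref{thm: global best approx}, so that $\int_0^T|(\tilde z_{kh}-z_{kh})(t,\gamma_k(t))|^2dt = (u-u_{kh}, g_{kh})_{I\times\Om}$ with $g_{kh}$ the discrete Green's function; then $\le \norm{u-u_{kh}}_{L^2(I;L^1(\Om))}\,\norm{g_{kh}}_{L^2(I;L^\infty(\Om))}$, and one needs the discrete Green's function bound $\norm{g_{kh}}_{L^2(I;L^\infty(\Om))} \le C\lh^{3/2}\bigl(\int_0^T|v_{kh}(t,\gamma_k(t))|^2dt\bigr)^{1/2}$ (or similar), which follows by combining the discrete Sobolev inequality with Lemmas~\ref{lemma 1}--\ref{lemma 3} and \eqref{eq: needed estimate for greens} — essentially the machinery already developed for Theorem~\ref{thm: global best approx}. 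Plugging in Theorem~\ref{th:est_u_l2l1}'s bound $\norm{u-u_{kh}}_{L^2(I;L^1(\Om))} \le (C\lh^2(k+h^2)+C_\gamma\lh k)\norm{q}_{L^2(I)}$ and absorbing the extra $\lh$ from the Green's function estimate yields the $\lh^3(k+h^2) + C_\gamma\lh k$ form; combining (I), (II), (III) and using $\norm{\hat u}_{L^2(I;L^\infty(\Om))}$ to also dominate the lower-order $L^2$ norms completes the proof. The delicate points are: (a) making sure the logarithmic powers match in all three terms so the stated $\lh^3$ is not exceeded, and (b) the $L^1$–$L^\infty$ duality pairing for term (III), which is precisely why Theorem~\ref{th:est_u_l2l1} was proved in the $L^2(I;L^1(\Om))$ norm rather than an $L^2$ norm.
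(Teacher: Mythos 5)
Your decomposition is exactly the paper's: the same triangle inequality into the curve-perturbation term, the Galerkin error $z-\tilde z_{kh}$ along $\gamma_k$, and the data-perturbation term $e_{kh}=\tilde z_{kh}-z_{kh}$, with the same intermediate $\tilde z_{kh}$; your treatment of terms (I) and (II) coincides with the paper's (Proposition~\ref{prop:reg_z} plus Theorem~\ref{thm: apriori local} with $\chi=i_h\pi_k z$ and $p=\lh$). Where you diverge is term (III). The paper does \emph{not} use a Green's function there: it simply observes that $e_{kh}\in\Xkh$ satisfies $B(\varphi_{kh},e_{kh})=(u-u_{kh},\varphi_{kh})_{I\times\Om}$, tests with $\varphi_{kh}=e_{kh}$, and combines the coercivity $\norm{\nabla e_{kh}}^2_{L^2(I\times\Om)}\le B(e_{kh},e_{kh})$ with the discrete Sobolev inequality to get $\norm{e_{kh}}_{L^2(I;L^\infty(\Om))}\le C\lh\,\norm{u-u_{kh}}_{L^2(I;L^1(\Om))}$ in three lines, after which Theorem~\ref{th:est_u_l2l1} finishes. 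Your smoothed-Dirac duality argument also works, but it is heavier than needed: you must set up a \emph{forward-in-time} discrete Green's function (since $e_{kh}$ already solves a backward problem, the regularized Dirac problem whose test function is $e_{kh}$ has the roles of the two slots of $B$ reversed relative to \eqref{eq: discrete heat with dirac RHS}), so Lemmas~\ref{lemma 1}--\ref{lemma 3} and \eqref{eq: needed estimate for greens} would have to be restated for that time-reversed problem — true by symmetry, but not literally quotable. Also, your Green's-function factor should be $\lh$ (namely $\lh^{1/2}$ from the discrete Sobolev inequality times $\lh^{1/2}$ from Lemma~\ref{lemma 3}), not $\lh^{3/2}$; with $\lh^{3/2}$ you would land at $\lh^{7/2}(k+h^2)$ and overshoot the stated $\lh^3$, so the careful log count you promise is genuinely necessary. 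In short: correct skeleton and correct terms (I)--(II); term (III) is done by a legitimate but more elaborate route that the paper replaces with a direct energy-plus-discrete-Sobolev bound on $\norm{e_{kh}}_{L^2(I;L^\infty(\Om))}$, which is both simpler and gives a slightly stronger (uniform-in-space) control than the pointwise-on-the-curve bound your duality argument produces.
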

\begin{proof}
First by the triangle inequality
$$
\begin{aligned}
\int_0^T \abs{z(t,\gamma(t)) - z_{kh}(t,\gamma_k(t))}^2\,dt\le &\int_0^T \abs{z(t,\gamma(t)) - z(t,\gamma_k(t))}^2\,dt\\
&+\int_0^T \abs{(z - z_{kh})(t,\gamma_k(t))}^2\,dt.
\end{aligned}
$$
Using Proposition \ref{prop:reg_z} and the assumptions on $\gamma$, we have similarly to Theorem \ref{th:est_u_l2l1}
\begin{align*}
\int_0^T \abs{z(t,\gamma(t)) - z(t,\gamma_k(t))}^2\,dt&\le C\|\gamma-\gamma_k\|^2_{C^0(I)} \int_0^T \|z(t,\cdot)\|^2_{C^{1}(\Om_0)}\,dt\\
&\le CC_\gamma^2 k^{2} \int_0^T \|z(t,\cdot)\|^2_{W^{2,p}(\Om_0)}\,dt\\
&\le CC_\gamma^2 pk^{2} \left(\|q\|^2_{L^2(I)}+\|\hat{u}\|^2_{L^2(I\times\Om)}\right).
\end{align*}
Setting $p=\lh$, we obtain
\begin{equation}\label{eq: error z gamma - z gamma_k}
    \left(\int_0^T \abs{z(t,\gamma(t)) - z(t,\gamma_k(t))}^2\,dt\right)^{\frac{1}{2}}\le CC_\gamma \lh k \left(\|q\|_{L^2(I)}+\|\hat{u}\|_{L^2(I\times\Om)}\right).
\end{equation}

Next, we introduce an intermediate adjoint state $\widetilde z_{kh} \in \Xkh$ defined by
\[
B(\varphi_{kh},\widetilde z_{kh}) = (u - \hat u,\varphi_{kh}) \quad \text{for all }\; \varphi_{kh} \in \Xkh,
\]
where $u = u(q)$ and therefore $\widetilde z_{kh}$ is the Galerkin projection of $z$. By the local best approximation result of Theorem~\ref{thm: apriori local} for any $\chi\in \Xkh$ we have
\[
 \begin{aligned}
\int_{0}^T|(z-\widetilde z_{kh})&(t,\gamma_k(t))|^2\,dt\le C\lh^2\int_{0}^T\left(\|z-\chi\|^2_{L^\infty(\Om_1)}+h^{-\frac{4}{p}}\|\pi_k z-\chi\|^2_{L^p(\Om_1)}\right)dt\\ &+C\lh^2\int_0^T\left(\|z-\chi\|^2_{L^2(\Omega)}+h\|\na(z-\chi)\|^2_{L^2(\Omega)}+\|\pi_kz-\chi\|^2_{L^2(\Omega)}\right)dt\\
&=J_1+J_2+J_3+J_4+J_5.
\end{aligned}
\]
The terms $J_1$, $J_2$, $J_3$, $J_4$ and $J_5$ can be estimated the same way as in the proof of Theorem~\ref{th:est_u_l2l1} using the regularity result for the adjoint state $z$ from Proposition~\ref{prop:reg_z}. This results in
\[
\int_0^T|(z-\widetilde z_{kh})(t,\gamma_k(t)|^2\,dt \le C\lh^2 (p h^{-\frac{2}{p}} +1)^2(h^2+k)^2
\left(\norm{q}^2_{L^2(I)}+\norm{\hat u}^2_{L^2(I;L^\infty(\Omega))}\right).
\]
Setting $p = \lh$ and taking square root, we obtain
\begin{equation}\label{eq:interm_z}
\left(\int_{0}^T|(z-\widetilde z_{kh})(t,\gamma_k(t))|^2\,dt\right)^{\frac{1}{2}} \le C\lh^2(h^2+k)\left(\norm{q}_{L^2(I)}+\norm{\hat u}_{L^2(I;L^\infty(\Omega))}\right).
\end{equation}
It remains to estimate the corresponding error between $\widetilde z_{kh}$ and $z_{kh}$. We denote $e_{kh} = \widetilde z_{kh} - z_{kh} \in \Xkh$. Then we have
\[
B(\varphi_{kh},e_{kh}) = (u - u_{kh}, \varphi_{kh})_{I\times \Om}  \quad \text{for all }\; \varphi \in \Xkh.
\]
As in the proof of Lemma~\ref{lemma 3} we use the fact that
\[
\norm{\nabla v}^2_{L^2(I \times \Om)} \le B(v,v)
\]
holds for all $v\in \Xkh$. 
Applying this inequality together with the discrete Sobolev inequality, see~\cite{BrennerSC_ScottLR_2008}, results in
\[
\begin{aligned}
\norm{e_{kh}}^2_{L^2(I;L^\infty(\Omega))}&\le C\lh\norm{\nabla e_{kh}}_{L^2(I \times \Om)}^2\\
 &\le  C\lh B(e_{kh},e_{kh}) \\
& =  C\lh (u - u_{kh}, e_{kh})_{I\times \Om}\\
& \le  C\lh\norm{u - u_{kh}}_{L^2(I;L^1(\Omega))} \norm{e_{kh}}_{L^2(I;L^\infty(\Omega))}.
\end{aligned}
\]
Therefore 
\[
\norm{e_{kh}}_{L^2(I;L^\infty(\Omega))} \le  C \lh \norm{u - u_{kh}}_{L^2(I;L^1(\Omega))}.
\]
Using Theorem~\ref{th:est_u_l2l1} we obtain
\[
\norm{e_{kh}}_{L^2(I;L^\infty(\Omega))} \le C \left(\lh^3(k + h^2)+C_\gamma\lh  k\right) \norm{q}_{L^2(I)}.
\]
Combining this estimate with~\eqref{eq:interm_z} we complete the proof.
\end{proof}


Using the result of Theorem~\ref{th:est_z} we proceed with the proof of Theorem~\ref{thm: main}.
\begin{proof}
Due to the quadratic structure of discrete reduced functional $j_{kh}$ the second derivative $j''_{kh}(q)(p,p)$ is independent of $q$ and there holds
\begin{equation}\label{eq:q_coerc}
j''_{kh}(q)(p,p) \ge \alpha \norm{p}^2_{L^2(I)} \quad \text{for all }\; p \in Q.
\end{equation}
Using optimality conditions~\eqref{eq:opt_cond} for $\oq$ and~\eqref{eq:opt_cond_kh} for $\oq_{kh}$ and the fact that $\oq,\oq_{kh} \in \Qad$ we obtain
\[
-j'_{kh}(\oq_{kh})(\oq-\oq_{kh}) \le 0 \le - j'(\oq)(\oq-\oq_{kh}).
\]
Using the coercivity~\eqref{eq:q_coerc} we get
\[
\begin{aligned}
\alpha \norm{\oq-\oq_{kh}}_{L^2(I)}^2 & \le j''_{kh}(\oq)(\oq-\oq_{kh},\oq-\oq_{kh})_I\\
& = j'_{kh}(\oq)(\oq-\oq_{kh}) - j'_{kh}(\oq_{kh})(\oq-\oq_{kh})\\
&\le j'_{kh}(\oq)(\oq-\oq_{kh}) - j'(\oq)(\oq-\oq_{kh}) \\
&=(z(\oq)(t,\gamma(t))-z_{kh}(\oq)(t,\gamma_k(t)),\oq-\oq_{kh})_I\\
& \le \left(\int_0^T \abs{z(\oq)(t,\gamma(t)) - z_{kh}(\oq)(t,\gamma_k(t))}^2\,dt\right)^{\frac{1}{2}}  \norm{\oq-\oq_{kh}}_{L^2(I)}.
\end{aligned}
\]
Applying Theorem~\ref{th:est_z} completes the proof.
\end{proof}

\bibliography{curve}
\bibliographystyle{siam}
\end{document}